\newtheorem{theorem}{Theorem}[section]
\newtheorem{lemma}[theorem]{Lemma}
\newtheorem{corollary}[theorem]{Corollary}
\newtheorem{claim}[theorem]{Claim}
\theoremstyle{definition}
\newtheorem{definition}[theorem]{Definition}
\theoremstyle{remark}
\numberwithin{equation}{section}
\DeclareMathOperator{\Alb}{Alb}
\DeclareMathOperator{\Pic}{Pic}
\DeclareMathOperator{\codim}{codim}
\DeclareMathOperator{\Spec}{Spec}
\begin{document}

\title{Pluricanonical maps of varieties of Albanese fiber dimension two}

\author{Hao Sun}

\address{Department of Mathematics, Shanghai Normal University, Shanghai 200234, People's Republic of China}

\address{Department of Mathematics, Huazhong Normal University, Wuhan 430079, People's Republic of China}

\email{hsun@mail.ccnu.edu.cn, hsunmath@gmail.com}
\thanks{This work is supported by NSFC and the Mathematical Tianyuan Foundation of China.}


\subjclass[2000]{14E05}

\date{November 16, 2012}

\keywords{Irregular variety, pluricanonical map, surface}

\begin{abstract}
In this paper we prove that for any smooth projective variety of
Albanese fiber dimension two and of general type, the $6$-canonical
map is birational. And we also show that the $5$-canonical map is
birational for any such variety with some geometric restrictions.
\end{abstract}

\maketitle
\section{Introduction}
Let $X$ be a smooth complex projective irregular variety of general
type, i.e., variety of general type with $q(X)>0$. We define the
Albanese fiber dimension of $X$ to be $e= \dim X-\dim a(X)$, where
$a: X\rightarrow \Alb(X)$ is the Albanese map. Recently, the
birationality of the $n$-th pluricanonical map $\varphi_{|nK_X|}$ of
$X$ has attracted a lot of attention.

When $e=0$, i.e., $X$ is of maximal Albanese dimension, it was shown
by Chen and Hacon \cite{CH1, CH2} that $\varphi_{|6K_X|}$ is
birational. This result was improved by Jiang, Lahoz and Tirabassi
\cite{JLT}, showing that $\varphi_{|3K_X|}$ is birational. When $e=
1$ or $2$, Chen and Hacon \cite{CH2} proved that
$\varphi_{|(e+5)K_X|}$ is a birational map. Recently, Jiang and the
author \cite{JS} showed that $\varphi_{|4K_X|}$ is birational if
$e=1$. The main results of this paper are some improvements of the
result of Chen and Hacon in the case of Albanese fiber dimension
two:

\begin{theorem}\label{theorem1.1}
Let $X$ be a smooth complex projective variety of Albanese fiber
dimension two and general type. Then the $6$-canonical map
$\varphi_{|6K_X|}$ is birational.
\end{theorem}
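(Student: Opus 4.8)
The plan is to follow the method of Chen and Hacon \cite{CH2}: separate two very general points of $X$ by the linear system $|6K_X|$, treating separately the case in which they lie on a common Albanese fibre and the case in which they do not. The saving over the bound $7K_X$ of \cite{CH2} should come from handling the twists by $\Pic^0$ more efficiently. Let $a\colon X\to A:=\Alb(X)$ be the Albanese map and $Y:=a(X)$; after replacing $X$ by a smooth birational model and taking the Stein factorization, I may assume $a$ is the composition of a fibration $f\colon X\to Y$ with connected fibres and a finite morphism $Y\to A$, with $Y$ normal, projective, of maximal Albanese dimension, and with general fibre $F$ a smooth surface --- a surface because $e=2$, and of general type because the bigness of $K_X$ restricts to the general member of the covering family $\{F_y\}$, so $K_X|_F\cong K_F$ is big. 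If $q(X)=0$ then $X$ is itself a surface of general type and the assertion is classical, so I may assume $\dim X\ge 3$. It then suffices to show that $|6K_X|$ separates two very general points $x_1,x_2$ together with their tangent directions.

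\emph{Points on the same fibre.} Here $x_1,x_2$ lie on a general $F$, on which $\omega_X$ restricts to $\omega_F$. Since $\varphi_{|5K_F|}$ is birational for every surface of general type (Bombieri) and $\chi(\mathcal O_F)\ge 1$, it is enough to see that the image of $H^0(X,6K_X)$ in $H^0(F,6K_F)$ still defines a birational map of $F$. I would deduce this by first showing $H^0(X,\omega_X^{\otimes 6}\otimes\alpha)\to H^0(F,\omega_F^{\otimes 6}\otimes\alpha|_F)$ surjective for general $\alpha\in\Pic^0(X)$ --- via vanishing of $H^1$ of the kernel sheaf on $Y$, using a Koll\'ar-type vanishing theorem together with Viehweg's weak positivity of $f_*\omega_{X/Y}^{\otimes 6}$ through the splitting $\omega_X^{\otimes 6}=\omega_{X/Y}^{\otimes 6}\otimes f^*\omega_Y^{\otimes 6}$ --- and then removing the twist by the Pareschi--Popa continuous global generation argument, which is exactly what forces $6K_X$ rather than $5K_X$. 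That $|6K_F\otimes\beta|$ stays birational for $\beta\in\Pic^0(F)$ is a short argument from $\chi(\mathcal O_F)\ge 1$ and birationality of $\varphi_{|5K_F|}$.

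\emph{Points on different fibres.} Put $y_i:=f(x_i)$, general and distinct in $Y$; I must find a section of $6K_X$ vanishing on $F_{y_1}$ but not on $F_{y_2}$, i.e.\ show that $f_*\omega_X^{\otimes 6}$, twisted by a general element of $\Pic^0(Y)$, separates $y_1$ and $y_2$. Since $Y$ has maximal Albanese dimension this is a statement about the Albanese map of $Y$, to be proved with the generic vanishing / Fourier--Mukai formalism on $\Alb(Y)$: the relevant twisted pushforward is $M$-regular --- hence continuously globally generated --- by combining Viehweg's weak positivity of the relative pluricanonical sheaves, the generic vanishing theorem for $a_*\omega_X$, and Kawamata--Viehweg and Koll\'ar vanishing, after which the Pareschi--Popa separation-of-points criterion concludes. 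When $\dim Y\in\{1,2\}$ the positivity available on $Y$ is thinnest, and there I would induct on $\dim X$: cut down by a general member of $|K_X+\alpha|$ (with mild singularities after a log resolution), restrict, invoke the lower-dimensional case of the theorem, and lift sections back up with vanishing. The accounting must be arranged so that exactly one copy of $K_X$ is spent in passing from birationality of the moving systems $|5K_X+\alpha|$ to that of the fixed system $|6K_X|$.

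The main obstacle I anticipate is the different-fibres case in the borderline configurations: $\dim Y\in\{1,2\}$ with $Y$ an elliptic curve or an abelian surface, so that $\omega_Y$ carries no positivity and the cohomological support loci $V^i(\omega_X^{\otimes m})$ are as large as generic vanishing allows, together with the low-invariant cases of the fibre surface (small $K_F^2$ or $p_g(F)$) where $\varphi_{|5K_F|}$ barely becomes birational. Checking that in these cases the codimensions of these loci and the available numerical room still suffice for $6K_X$ rather than $7K_X$ --- and isolating the extra geometric hypotheses under which even $\varphi_{|5K_X|}$ is birational, which is the content of the second statement of the abstract --- is where the real work lies.
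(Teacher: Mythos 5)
Your outline is essentially the Chen--Hacon framework, and it stalls exactly where you yourself say ``the real work lies.'' There are two concrete gaps. First, the accounting: your plan to spend ``exactly one copy of $K_X$'' to pass from the twisted systems to the untwisted $|6K_X|$ is not available, because $H^0(K_X+\alpha)$ may vanish for general (even all) $\alpha\in\Pic^0(X)$ --- $X$ has positive Albanese fiber dimension, so generic vanishing gives no lower bound on $V^0(\omega_X)$; this is precisely why the paper's $5$-canonical statement (Theorem 1.2/Theorem 3.3) carries an extra hypothesis on $V^0(\omega_X)$. The twist is removed in the paper (following \cite{CH3}) by multiplying with $|2K_X+\gamma|$, which is nonempty for every $\gamma$ because $a_*\mathcal{O}_X(K_X+L_2)$ is a nonzero $IT^0$ sheaf; with that correct two-copy accounting, your same-fibre argument based on birationality of $|5K_F|$ only yields $7K_X$ in general, and yields $6K_X$ exactly when $|4K_S|$ is birational on the fibre --- which is \cite[Corollary 2.4]{CH3}, the paper's starting point, not its conclusion. (Your induction step in the different-fibres case, cutting by a general member of $|K_X+\alpha|$, founders on the same point: such members need not exist.)

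Second, and decisively, the only situation in which that starting point fails --- general fibre $S$ with $(K_{S_0}^2,p_g)=(1,2)$, where $|4K_S|$ is base point free but only generically $2:1$ by Bombieri --- is the entire content of Theorem 1.1, and your proposal offers no mechanism for it; the borderline fibre invariants are exactly what you defer. The paper's argument there is not a same-fibre/different-fibre separation at all: the relative $4$-canonical map realizes $X$ birationally as a double cover $\pi:\widetilde{X}\to\widetilde{Y}$ over $W$ with $K_{\widetilde{X}}=\pi^*(K_{\widetilde{Y}}+L)$, and birationality of $|6K_{\widetilde{X}}+\alpha|$ is reduced by the double-cover criterion (Lemma \ref{lemma2.2}) to two statements: separation of distinct general $\pi$-fibres, supplied by \cite[Theorem 2.8]{CH3}, and nonvanishing of the anti-invariant summand, $H^0(\widetilde{Y},6K_{\widetilde{Y}}+5L+\alpha)\neq0$ for all $\alpha$. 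The latter is obtained by showing that $\nu_*p_*(\mathcal{O}_{\widetilde{Y}}(6K_{\widetilde{Y}}+5L)\otimes\mathcal{J}_6^2)$ is a nonzero $IT^0$ sheaf on the Albanese variety, the nonzeroness on a general fibre coming from birationality of $|6K_H|$ via Lemma \ref{lemma2.2} applied fibrewise. Without an argument of this kind (or a substitute) for the $(1,2)$ fibres, the proposal does not prove the theorem.
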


\begin{theorem}\label{theorem1.2}
Let $X$ be a smooth complex projective variety of Albanese fiber
dimension two and general type. If the translates through $0$ of all
components of $V^0(\omega_X)$ generate $\Pic^0(X)$, then
$\varphi_{|5K_X|}$ is birational.
\end{theorem}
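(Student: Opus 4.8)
The plan is to prove that $\varphi_{|5K_X|}$ separates two general points $x_1,x_2\in X$ (the separation of general tangent directions being entirely analogous); since $5K_X$ is big, $\varphi_{|5K_X|}$ is then generically finite and generically injective, hence birational onto its image. Throughout I would work with the Albanese map $a\colon X\to A=\Alb(X)$ and, after a birational modification of $X$, with a fibration $f\colon X\to Z$ onto a smooth variety with $\dim Z=\dim X-2$ through which $a$ factors, whose general fibre $F$ is a smooth projective surface; because $K_X$ is big and the fibres $F$ cover $X$, the surface $F$ is of general type and $\omega_X|_F\cong\omega_F$.

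The case where $x_1$ and $x_2$ lie on distinct fibres of $f$ is the easy one and does not use the hypothesis: separation of such a pair is already contained in the work of Chen--Hacon \cite{CH2} — there it appears inside the $\varphi_{|7K_X|}$ statement, but it needs only pluricanonical systems of order at most $5$ and rests on generic vanishing for $a_*\omega_X^{\otimes 2}$ — so $\varphi_{|5K_X|}$ separates $x_1$ from $x_2$. We may therefore assume that $x_1$ and $x_2$ lie on a single general fibre $F$.

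In this remaining case the goal is to construct an effective $\mathbb{Q}$-divisor $D$ on $X$, with $D\sim_{\mathbb{Q}}(4-\delta)K_X$ for some small $\delta>0$ up to a negligible nef divisor pulled back from $A$, such that $(X,D)$ is log canonical but not klt at $x_1$ and at $x_2$, with $\{x_1\}$ and $\{x_2\}$ isolated components of the non-klt locus. Given such a $D$, one has $4K_X-D\sim_{\mathbb{Q}}\delta K_X$, which is big; writing $\delta K_X$ as an ample plus an effective $\mathbb{Q}$-divisor and absorbing the effective part into $D$ away from the general points $x_1,x_2$, Nadel vanishing yields $H^1(X,\omega_X^{\otimes5}\otimes\mathcal{J}(X,D))=0$, and the two isolated non-klt points then lift to sections of $|5K_X|$ that separate $x_1$ from $x_2$. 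To produce $D$ I would first work on the surface $F$: a Riemann--Roch count on the minimal model of $F$ (using $\chi(\mathcal{O})\ge1$ and $K^2\ge1$) shows that $|3K_F|$ contains a divisor with a point of multiplicity $\ge 2$ at the general point $x_i$, after which the resulting non-klt centre is cut down to the single point $x_i$ by the standard tie-breaking technique, the directions transverse to $F$ being supplied by pulled-back translates of an ample divisor on $A$.

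The point at which the hypothesis on $V^0(\omega_X)$ is used, and what I expect to be the main obstacle, is transplanting this fibrewise construction to all of $X$: one needs the restriction maps $H^0(X,\omega_X^{\otimes k})\to H^0(F,\omega_F^{\otimes k})$ to be surjective for the orders $k\le 3$ occurring above (and for their twists by characters of $\Pic^0(X)$), equivalently the vanishing of $H^1$ of the corresponding twist by the ideal of the general fibre; pushed down along $f$, this is a vanishing statement for $a_*\omega_X^{\otimes k}$ twisted by the ideal of a general point of $a(X)$. By generic vanishing and the positivity properties of $a_*\omega_X^{\otimes k}$ this vanishing holds after tensoring with a general $P\in\Pic^0(X)$; the hypothesis that the abelian subvarieties underlying the components of $V^0(\omega_X)$ generate $\Pic^0(X)$ is precisely what forces the relevant cohomological support locus to avoid the trivial character, so that one does not have to pay an extra copy of $\omega_X$ to replace the general $P$ by $\mathcal{O}_X$. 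Dropping the hypothesis forces that extra copy back in, which is why Theorem \ref{theorem1.1} requires $6$ rather than $5$. Apart from this ``general $P$ versus origin'' issue, the delicate points will be the usual ones: controlling the higher direct images $R^if_*$ and the non-minimality of $F$, and ensuring that the non-klt centre at $x_i$ can be made an isolated point for a general $x_i$ while the total $\mathbb{Q}$-coefficient of $D$ stays strictly below $4$.
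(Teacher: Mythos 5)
Your strategy (multiplier ideals, tie-breaking and Nadel vanishing on a divisor $D\sim_{\mathbb{Q}}(4-\delta)K_X$) is genuinely different from the paper's (which works with $M$-regularity/continuous global generation of $a_*(\mathcal{I}_y(K_X+L_3))$, Tirabassi's Lemma \ref{Ti}, and the double-cover Lemma \ref{lemma2.2}), but as sketched it has two concrete gaps. First, the fibrewise construction does not work in exactly the critical case. When $(K_{S_0}^2,p_g)=(1,2)$ --- the case that forces the paper into its Step 3 double-cover analysis --- one has $h^0(3K_F)=\chi+3K^2=6$, so a single member of $|3K_F|$ cannot be forced to have multiplicity $2$ at both general points ($6$ conditions need $h^0\geq 7$), while using one member for each point costs $6K_F$, exceeding the total fibre budget $(4-\delta)K_F$ that $D\equiv(4-\delta)K_X$ provides on $F$. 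Worse, such surfaces carry an involution through which $\varphi_{|3K_F|}$ and $\varphi_{|4K_F|}$ factor, so $|4K_F|$ fails to separate a general conjugate pair $x_1,x_2$; consequently no $D_F\equiv\lambda K_F$ with $\lambda<3$ can be log canonical with $\{x_1\}$ an isolated non-klt centre and non-klt at $x_2$ for such a pair, since Nadel vanishing on $F$ would then make $|4K_F|$ separate them. So ``a double point in $|3K_F|$ plus tie-breaking'' cannot produce the required pair; you are pushed to $\lambda$ close to $4$ on the fibre, and you must still pay for inversion of adjunction (the fibre class or $f^*$-positivity needed to localize at $F$) and for cutting non-klt centres that lie \emph{inside} $F$ --- and here your proposed source of transverse positivity, pullbacks of ample divisors from $A$, is useless, because such pullbacks are constant along $F$ and cannot cut a centre contained in a fibre; only $K_X|_F$ can, eating further into the budget. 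None of this is costed, and it is precisely where the difficulty lives.

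Second, the hypothesis on $V^0(\omega_X)$ is never actually used in your argument. The general twists $P\in\Pic^0(X)$ appearing in your extension statements are numerically trivial, and Kawamata--Viehweg/Nadel vanishing only sees numerical classes, so nothing in your sketch ever requires passing from a general character to the trivial one; taken at face value your argument would prove \emph{unconditional} birationality of $\varphi_{|5K_X|}$, which is stronger than Theorem \ref{theorem1.1} and than anything established here, and the mechanism you gesture at (``the support locus avoids the trivial character'') is not how the hypothesis operates. In the paper the hypothesis enters quite differently and is the heart of the proof: by Lemma \ref{Ti} the generation assumption forces the divisorial family $D_3\subset X\times\Pic^0(X)$ of base points of $\{|3K_X+\alpha|\}$ to meet $X\times V^0(\omega_X)$ dominantly over $X$, and Step 1 of Theorem \ref{theorem3.3} shows via the multiplication maps $H^0(K_X+\beta)\otimes H^0(2K_X+\cdot)\to H^0(3K_X+\beta+\cdot)$ that this is impossible unless $D_3=0$, which gives $M$-regularity of $a_*(\mathcal{I}_y(K_X+L_3))$ and hence separation by $|K_X+L_3+\alpha|$; the residual case where $|3K_S|$ is not birational is then handled by Lemma \ref{lemma2.2} together with the $IT^0$ nonvanishing of the anti-invariant summand of the relative double cover. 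Your proposal has no counterpart to either ingredient, so the key steps of the statement remain unproved. (The reduction of the distinct-fibre case to Chen--Hacon at level $\leq 5$ is plausible but is also asserted rather than verified.)
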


By Theorem \ref{theorem1.1}, we can immediately obtain a recent
result of Chen, Chen and Jiang:

\begin{corollary}\cite[Theorem 1.1]{CCJ}
Let $V$ be a smooth complex projective irregular $3$-fold of general
type. Then $\varphi_{|6K_X|}$ is birational.
\end{corollary}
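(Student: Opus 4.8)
The plan is to reduce the statement to results already recorded in the introduction by a case analysis on the Albanese fiber dimension $e$ of $V$. Since $V$ is irregular we have $q(V)\geq 1$, so $\dim\Alb(V)\geq 1$ and hence the Albanese image $a(V)$ has dimension at least one. Combined with $\dim V=3$, this forces $e=\dim V-\dim a(V)\in\{0,1,2\}$; in particular the degenerate possibility $e=3$ (which would mean $a(V)$ is a point, i.e.\ $q(V)=0$) is excluded. It therefore suffices to check that in each of the three remaining cases the $6$-canonical map of a smooth projective $3$-fold of general type is birational.

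I would run through the cases as follows. If $e=0$, then $V$ is of maximal Albanese dimension, and by the theorem of Chen and Hacon \cite{CH1, CH2} — indeed by the sharper statement of Jiang, Lahoz and Tirabassi \cite{JLT} — $\varphi_{|6K_V|}$ is birational. If $e=1$, then $V$ is a $3$-fold of Albanese fiber dimension one, and Chen and Hacon \cite{CH2} show that $\varphi_{|(e+5)K_V|}=\varphi_{|6K_V|}$ is birational (alternatively, \cite{JS} already gives birationality of $\varphi_{|4K_V|}$). Finally, if $e=2$, then $V$ is precisely a smooth projective variety of Albanese fiber dimension two and of general type, so Theorem \ref{theorem1.1} applies directly and $\varphi_{|6K_V|}$ is birational. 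Combining the three cases yields the corollary.

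There is essentially no genuine obstacle in this deduction: the only point to verify is that irregularity pins down $e$ to lie in $\{0,1,2\}$, which is immediate from the construction of the Albanese map, and the rest is bookkeeping over the known cases. All the substantive content is carried by Theorem \ref{theorem1.1}, which handles exactly the previously worst case $e=2$, where before only $\varphi_{|7K_V|}$ was known to be birational. (Theorem \ref{theorem1.2} is not needed here, as one only asks for the $6$-canonical map.)
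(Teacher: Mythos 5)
Your argument is correct and is exactly the deduction the paper intends: irregularity forces $e\in\{0,1,2\}$, the cases $e=0,1$ are covered by the cited results of Chen--Hacon (or the sharper ones of \cite{JLT}, \cite{JS}), and $e=2$ is Theorem \ref{theorem1.1}. The paper leaves this case analysis implicit ("By Theorem \ref{theorem1.1}, we can immediately obtain..."), so your write-up just makes the same routine reduction explicit.
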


\subsection*{Acknowledgments}The author would like to thank Jungkai A. Chen, Zhi Jiang
and Lei Zhang for various comments and useful discussions. This work
was done while the author was visiting Emmy Noether Research
Institute for Mathematics. He is grateful to this institution for
hospitality.

\section{Definitions and lemmas}
In this section, we recall some notion and useful lemmas.

\begin{definition}
Let $\mathcal{F}$ be a coherent sheaf on a smooth projective variety
$Y$.
\begin{enumerate}
\item The $i$-th cohomological support loci of $\mathcal{F}$ is
$$V^i(\mathcal{F})=\{\alpha\in\Pic^0(Y)|h^i(\mathcal{F}\otimes\alpha)>0\}.$$

\item We say $\mathcal{F}$ is $IT^0$ if $H^i(\mathcal{F}\otimes\alpha)=0$ for all $i\geq1$ and all $\alpha\in\Pic^0(Y)$.

\item $\mathcal{F}$ is called $M$-regular if $\codim
V^i(\mathcal{F})>i$ for every $i>0$ and $Y$ is an abelian variety.

\item We say $\mathcal{F}$ is continuously globally generated at $y\in Y$ (in brief CGG at $y$)
if the nature map $$\bigoplus_{\alpha\in U}
H^0(\mathcal{F}\otimes\alpha)\otimes{\alpha}^{\vee}\rightarrow
\mathcal{F}\otimes \mathbb{C}(y)$$ is surjective for any non-empty
open subset $U\subset{\Pic}^0(Y)$.

\item $\mathcal{F}$ is said to have no essential base point at $y\in Y$
if for any surjective map $\mathcal{F}\rightarrow \mathcal{O}_y$,
there is a non-empty open subset $U\subset{\Pic}^0(Y)$ such that for
all $\alpha\in U$, the induced map
$H^0(\mathcal{F}\otimes\alpha)\rightarrow
H^0(\mathcal{O}_y\otimes\alpha)$ is surjective.
\end{enumerate}
\end{definition}

\begin{lemma}\label{lemma2.2}
Let $\pi:X\rightarrow Y$ be a double covering branched along a
reduced divisor $B\in|2L|$, where $X$ is a projective variety, $Y$
is a smooth projective variety and $L$ is a divisor on $Y$. Let $D$
be a divisor on $Y$. Then $|\pi^*D|$ induces a birational map if and
only if $|D|$ induces a birational map of $Y$ and $H^0(Y,
\mathcal{O}_Y(D-L))\neq0$.
\end{lemma}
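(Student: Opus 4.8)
The plan is to exploit the structure of a double cover $\pi\colon X\to Y$, namely the splitting $\pi_*\mathcal{O}_X=\mathcal{O}_Y\oplus\mathcal{O}_Y(-L)$. Pulling back and using the projection formula gives $H^0(X,\pi^*D)=H^0(Y,D)\oplus H^0(Y,D-L)$, and one should check the pullbacks of $\pi^*D$-sections land in these two summands compatibly with the involution $\sigma$ of the cover: the $+1$-eigenspace is $H^0(Y,D)$ (pullbacks of sections on $Y$) and the $-1$-eigenspace is $H^0(Y,D-L)$ (sections vanishing on the ramification, twisted down by $L$). So $|\pi^*D|$ is the linear system built from both $|D|$ pulled back and this extra anti-invariant piece.

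First I would prove the ``only if'' direction. If $|\pi^*D|$ is birational, then in particular it separates the two points of a general fibre $\pi^{-1}(y)$; since invariant sections are constant on fibres, this forces the anti-invariant part $H^0(Y,D-L)$ to be nonzero. Moreover, $|\pi^*D|$ birational implies it separates generic points in different fibres, and composing the rational map $\varphi_{|\pi^*D|}$ with the quotient by $\sigma$ (or simply restricting attention to the invariant subsystem $\pi^*|D|$ together with the observation that birationality of $\varphi_{|\pi^*D|}$ descends) shows $|D|$ separates generic points of $Y$, i.e.\ $|D|$ is birational. Here I would be a little careful: the cleanest argument is that $\varphi_{|\pi^*D|}$ is $\sigma$-equivariant for a suitable linear action on the target, hence induces a rational map on the quotient $X/\sigma = Y$ which factors $\varphi_{|D|}$, and birationality upstairs plus the degree-two nature of $\pi$ forces birationality of $\varphi_{|D|}$.

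For the ``if'' direction, assume $|D|$ is birational on $Y$ and $H^0(Y,D-L)\neq 0$. Birationality of $|D|$ already gives that $\pi^*|D|\subset|\pi^*D|$ separates generic points of $X$ lying over distinct points of $Y$, and separates tangent vectors transverse to the fibres. It remains to separate the two points of a general fibre $\pi^{-1}(y)$ (and the two tangent directions there). A nonzero section $s\in H^0(Y,D-L)$ produces an anti-invariant section of $\pi^*D$ that vanishes exactly on $\pi^{-1}(\mathrm{div}(s))\cup(\text{ramification})$; for $y$ outside $\mathrm{div}(s)$ and off the branch locus $B$, this section takes opposite nonzero values at the two points of $\pi^{-1}(y)$, hence separates them. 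Combining: $|\pi^*D|$ separates a general pair of points of $X$, so $\varphi_{|\pi^*D|}$ is birational onto its image.

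The main obstacle I anticipate is the ``only if'' direction, specifically making rigorous the descent of birationality through the quotient $X\to Y$: one must rule out the possibility that $|\pi^*D|$ is birational while its invariant subsystem $\pi^*|D|$ is not, which is exactly where the hypothesis $H^0(Y,D-L)\neq 0$ must be shown to be \emph{forced} rather than merely assumed, and where one must argue that $|D|$ itself (not just some larger system) is birational. The eigenspace decomposition of $H^0(X,\pi^*D)$ under $\sigma$ is the key technical tool that makes this manageable, since it identifies the ``new'' sections precisely as $H^0(Y,D-L)$ and shows the invariant part is exactly $\pi^*H^0(Y,D)$.
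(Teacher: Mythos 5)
Your decomposition of $H^0(X,\pi^*D)$ into the $\sigma$-eigenspaces $H^0(Y,D)\oplus H^0(Y,D-L)$ is exactly the paper's starting point, and your ``if'' direction and your argument that birationality of $|\pi^*D|$ forces $H^0(D-L)\neq 0$ are in substance the paper's proof: the paper does the same thing in an explicit affine chart $z^2=f$, exhibiting a section $s_0+(t_0(y)-t_1(y))\sqrt{f(y)}+t_1z$ vanishing at one point of a general fibre but not the other. One cosmetic caveat in your ``if'' direction: an anti-invariant section taking opposite nonzero values at the two points of $\pi^{-1}(y)$ does not by itself separate them; you also need an invariant section nonvanishing at $y$ (available for general $y$ since $|D|$ is birational, hence nonempty with proper base locus), either to build a combination vanishing at exactly one of the two points or to observe that $[\,v:c\,]$ and $[\,v:-c\,]$ with $v\neq0$, $c\neq0$ are distinct points of projective space.

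The genuine gap is the step you flag yourself: that $|\pi^*D|$ birational forces $|D|$ birational. The proposed descent through $\sigma$-equivariance does not work, because the quotient of the image by the linear involution $S$ is described by \emph{all} $S$-invariant functions, which include quadratic expressions such as $(t_jz)(t_kz)=t_jt_kf$; these come from $H^0(2D)$, not from $H^0(D)$, so birationality of the induced map from $Y$ to the quotient does not give birationality of $\varphi_{|D|}$. In fact the implication is false: let $Y$ be an elliptic curve, $\deg L=1$, $B\in|2L|$ two distinct points, so $X$ is a genus-$2$ curve with $K_X=\pi^*L$, and let $D$ have degree $2$ with $D\not\sim 2L$. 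Then $h^0(\pi^*D)=h^0(D)+h^0(D-L)=3$, and since $\pi^*(D-2L)\not\cong\mathcal{O}_X$ (push forward and use $\deg(D-3L)<0$), $\pi^*D\not\cong 2K_X$, so the base-point-free complete system $|\pi^*D|$ maps $X$ birationally onto a plane quartic; yet $|D|$ is a degree-$2$ pencil on $Y$, hence $2$-to-$1$ and not birational, while $H^0(D-L)\neq0$. So no argument can close this gap: that half of the ``only if'' direction is too strong as stated. You are in good company — the paper dismisses the entire ``only if'' direction as immediate from the isomorphism, which is justified only for the half $H^0(D-L)\neq0$; fortunately that is the only half invoked later (in the proof of Theorem \ref{theorem3.1}), and both you and the paper establish it correctly.
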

\begin{proof}
We know that
$\pi_*\pi^*\mathcal{O}_Y(D)=\mathcal{O}_Y(D)\oplus\mathcal{O}_Y(D-L)$.
Hence we have
$$H^0(X,\mathcal{O}_X(\pi^*D))=H^0(Y,\mathcal{O}_Y(D))\oplus H^0(Y,\mathcal{O}_Y(D-L)).$$
The only-if-direction follows immediately from the above
isomorphism.

To prove the if-direction, we take an open affine subset $U\subset
Y$. Suppose that $U=\Spec R$ for some ring $R$. We know that
$\pi^{-1}(U)=\Spec R[z]/(z^2-f)$, where $f\in R$ is the local
defining equation of $B$. From the isomorphism of $R$-modules
$$R[z]/(z^2-f)\cong R\oplus Rz,$$ it follows that we can choose $s_1,\ldots, s_k, t_1,\ldots, t_k\in
R$ such that $1, s_1,\ldots, s_k$ is a basis of $H^0(D)$ and $t_1z,
\ldots, t_nz$ is a basis of $H^0(D-L)$. Let $y\in Y$ be a general
point. Since $H^0(D-L)\neq0$, we can assume that $f(y)\neq0$ and
$t_1(y)\neq0$.

Let $\{x_1, x_2\}$ be the preimage of $y$, where $x_1=(y,
\sqrt{f(y)})$ and $x_2=(y, -\sqrt{f(y)})$. Since $|\pi^*D|$
separates two general points on two distinct general fibers of
$\pi$, we can find a section $s_0+t_0z\in H^0(X,
\mathcal{O}_X(\pi^*D))$ vanishes along $y$, where $s_0\in H^0(D)$
and $t_0z\in H^0(D-L)$. Hence section
$$s_0+(t_0(y)-t_1(y))\sqrt{f(y)}+t_1z\in H^0(\pi^*D)$$ vanishes along $x_1$ but does not vanish along
$x_2$. It follows that $|\pi^*D|$ separates two points on a general
fiber of $\pi$. Therefore $|\pi^*D|$ induces a birational map.
\end{proof}

\begin{lemma}\label{lemma2.3}
Let $f: X\rightarrow W$ be a morphism between smooth projective
varieties with a general fiber $F$. Suppose that $\kappa(W)\geq0$
and $K_X$ is $W$-big, i.e., $sK_X\geq f^*L$ for some ample divisor
$L$ on $W$ and some integer $s\gg0$. Suppose further that
$h^0(mK_F)>0$ for some $m\geq2$. Then after replacing $X$ by an
appropriate birational model, there exist positive integers $b$, $c$
and there is a normal crossing divisor $$B\in|bc(m-1)K_X-f^*bM|$$
such that $\lfloor\frac{B}{bc}\rfloor|_F\leq \mathcal{B}_{m,F}$,
$\lfloor\frac{B}{bc}\rfloor\leq \mathcal{B}_{m,\alpha}$, for all
$\alpha\in\Pic^0(W)$. Here $\mathcal{B}_{m,F}$ (resp.
$\mathcal{B}_{m,\alpha}$) is the fixed part of $|mK_F|$ (resp.
$|mK_X+f^*\alpha|$), $M$ is a given nef and big divisor on $W$ and
$b$, $c$ are sufficiently large integers depending on $M$ and $K_X$.
\end{lemma}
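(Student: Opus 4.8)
The proof will follow the familiar Chen--Hacon pattern, and the two positivity hypotheses enter in complementary ways. Since $f^*K_W$ is $W$-trivial, $K_{X/W}=K_X-f^*K_W$ is again $W$-big, so for $s\gg0$ the class $s(m-1)K_{X/W}-f^*M$ is $\mathbb{Q}$-effective; and since $\kappa(W)\ge0$ the class $(m-1)f^*K_W$ is $\mathbb{Q}$-effective as well. Adding these, $c(m-1)K_X-f^*M$ is $\mathbb{Q}$-effective for every $c\gg0$, so after clearing denominators (and torsion) the ambient linear system $|bc(m-1)K_X-f^*bM|$ is non-empty for suitable $b,c$. The remaining, and real, content is to exhibit a member $B$ whose rounded-down $\tfrac1{bc}$-part lies below every fixed divisor $\mathcal B_{m,\alpha}$ and below $\mathcal B_{m,F}$ on a general fibre.

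I would first pass to a good birational model of $f$: a smooth model on which $f$ is a morphism, on which for general $\alpha\in\Pic^0(W)$ the movable part $M_{m,\alpha}$ of $|mK_X+f^*\alpha|$ is base-point free, the part of $\mathcal B_{m,\alpha}$ dominating $W$ is the same for all general $\alpha$ (call it $\mathcal B_m^{\mathrm{hor}}$), and all relevant divisors are simple normal crossing; only finitely many blow-ups are needed because the horizontal fixed part is generically constant in $\alpha$. On such a model, for general $\alpha$ the restriction $H^0(mK_X+f^*\alpha)\to H^0(mK_F)$ is surjective -- this is where $h^0(mK_F)>0$ enters, together with generic global generation of $f_*\omega_X^m$ after a general twist -- so that $\mathcal B_m^{\mathrm{hor}}|_F=\mathcal B_{m,F}$, and moreover $\mathrm{mult}_\Gamma\mathcal B_{m,\alpha}\ge\mathrm{mult}_{\Gamma|_F}\mathcal B_{m,F}$ for \emph{every} $\alpha$ and every prime divisor $\Gamma$ dominating $W$ (since the restriction of any section lies in $H^0(mK_F)$, all of whose members vanish along $\Gamma|_F$ to order at least $\mathrm{mult}_{\Gamma|_F}\mathcal B_{m,F}$).

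To build $B$, split $c=d+mc'$. Fix $d$ large enough that $d(m-1)K_X-f^*(M+\gamma)$ is represented by an effective $\mathbb{Q}$-divisor $\Delta_0$ (for the auxiliary class $\gamma\in\Pic^0(W)$ chosen next), then pick general $\alpha_1,\dots,\alpha_{c'(m-1)}\in\Pic^0(W)$ with $\sum_i\alpha_i=\gamma$ and general members $D_i\in|mK_X+f^*\alpha_i|$, so that $\sum_iD_i=R+\sum_i\mathcal B_{m,\alpha_i}$ with $R$ reduced, normal crossing, and sharing no component with $\Delta_0$ or any $\mathcal B_{m,\alpha_i}$. Set
\[
B:=b\bigl(\Delta_0+R+{\textstyle\sum_i}\mathcal B_{m,\alpha_i}\bigr)\ \in\ \bigl|bc(m-1)K_X-f^*bM\bigr|,
\]
which, after a generic perturbation of $\Delta_0$ inside its linear system and a log resolution, is normal crossing. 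The point of the splitting is the rounding: $R$ is reduced and meets no fixed divisor, hence contributes nothing to $\lfloor B/(bc)\rfloor$ as soon as $c\ge2$, while for any other prime divisor $\Gamma$ one has, as $c'\to\infty$,
\[
\mathrm{mult}_\Gamma\tfrac{B}{bc}=\frac{\mathrm{mult}_\Gamma\Delta_0+\sum_i\mathrm{mult}_\Gamma\mathcal B_{m,\alpha_i}}{d+mc'}\ \longrightarrow\ \frac{m-1}{m}\,v_\Gamma ,
\]
where $v_\Gamma$ is the generic value of $\mathrm{mult}_\Gamma\mathcal B_{m,\alpha}$; since $\tfrac{m-1}{m}<1$, for $b,c\gg0$ we get $\lfloor\mathrm{mult}_\Gamma(B/(bc))\rfloor\le v_\Gamma$. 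When $\Gamma$ dominates $W$, the last inequality of the previous paragraph gives $v_\Gamma\le\mathrm{mult}_\Gamma\mathcal B_{m,\alpha}$ for all $\alpha$, so $\lfloor B/(bc)\rfloor\le\mathcal B_{m,\alpha}$ along such $\Gamma$; and running the same estimate after restriction to a general $F$ (where $f^*M$ disappears, so $(B/(bc))|_F\sim_{\mathbb{Q}}(m-1)K_F$ and $\sum_i\mathcal B_{m,\alpha_i}|_F=c'(m-1)\mathcal B_{m,F}$) yields $\lfloor B/(bc)\rfloor|_F\le\mathcal B_{m,F}$.

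I expect the crux to be the \emph{vertical} case of $\lfloor B/(bc)\rfloor\le\mathcal B_{m,\alpha}$: for a component $\Gamma$ not dominating $W$ there is no general fibre to restrict to, and $\mathrm{mult}_\Gamma\mathcal B_{m,\alpha}$ can genuinely drop below its generic value $v_\Gamma$ as $\alpha$ varies in $\Pic^0(W)$ (extra sections appearing for special $\alpha$), so bounding the round-down by $v_\Gamma$ is not by itself sufficient. This is precisely what the clause ``replacing $X$ by an appropriate birational model'' is there to fix: one must blow up so that the generic vertical fixed part, as well as the base locus of $d(m-1)K_X-f^*(M+\gamma)$, is swept into the common base locus $\bigcap_{\alpha}\mathrm{Bs}\,|mK_X+f^*\alpha|$, and this is also why the lemma only claims the existence of $b,c$ ``sufficiently large depending on $M$ and $K_X$'' -- one needs enough room to dilute each vertical contribution below the prescribed threshold. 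Once these components are under control, the estimates of the previous paragraph close the argument, and every divisor produced is normal crossing on the final model.
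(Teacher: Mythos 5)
The paper does not actually prove this lemma: its ``proof'' is the single line ``See [CH3, Lemma 2.5]'', so the statement is imported wholesale from Chen--Hacon and the only fair comparison is between your reconstruction and their argument. Your skeleton is the right one (an effective piece coming from $W$-bigness of $K_X$ and $\kappa(W)\geq 0$, plus many general members of generically twisted systems $|mK_X+f^*\alpha_i|$, then divide by $bc$ and round down), but as written the proof has a genuine gap, and it is exactly the one you flag yourself. Your estimate only bounds $\mathrm{mult}_\Gamma(B/(bc))$ by roughly $\frac{m-1}{m}v_\Gamma$, where $v_\Gamma$ is the multiplicity of $\mathcal{B}_{m,\alpha}$ for \emph{generic} $\alpha$; but the conclusion $\lfloor B/(bc)\rfloor\leq\mathcal{B}_{m,\alpha}$ is asserted for \emph{every} $\alpha\in\Pic^0(W)$, and it is genuinely used in that strength later in the paper (the identification $H^0(K_X+L_m+\alpha)\cong H^0(mK_X+\alpha)$ for all $\alpha$). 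At a jumping point of $h^0(mK_X+f^*\alpha)$ the linear system gets larger and $\mathrm{mult}_\Gamma\mathcal{B}_{m,\alpha}$ can drop strictly below $v_\Gamma$, and neither of your proposed remedies closes this: blowing up $X$ further does not raise $\mathrm{mult}_\Gamma\mathcal{B}_{m,\alpha}$ relative to $v_\Gamma$, and enlarging $b,c$ only drives the average down toward $\frac{m-1}{m}v_\Gamma$, not below $\min_\alpha\mathrm{mult}_\Gamma\mathcal{B}_{m,\alpha}$. The standard way to get the uniform statement is different in an essential point: take $B$ to be a \emph{general} member of the full system $|bc(m-1)K_X-f^*bM|$ and, for each individual $\alpha$, exhibit a comparison member of that same system of the form (large multiple of a general member of $|mK_X+f^*\alpha|$ itself) plus an error divisor whose multiplicities are controlled uniformly in $\alpha$ (the numerically trivial twist is absorbed using sufficiently positive multiples of $M$ and the $W$-bigness, uniformly over the compact $\Pic^0(W)$); then $\mathrm{mult}_\Gamma B$ is bounded by the multiplicities of that member, which compares directly with $\mathcal{B}_{m,\alpha}$ rather than with the generic value.

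A second step you assume without proof is the surjectivity of $H^0(mK_X+f^*\alpha)\to H^0(mK_F)$ for general $\alpha$ (equivalently, generic global generation of $f_*\omega_X^{\otimes m}$ after a general twist), which you need in order to know $v_\Gamma=\mathrm{mult}_{\Gamma|_F}\mathcal{B}_{m,F}$ for horizontal $\Gamma$ and hence to deduce $\lfloor B/(bc)\rfloor|_F\leq\mathcal{B}_{m,F}$. In the generality in which the lemma is stated ($W$ any smooth projective variety with $\kappa(W)\geq 0$, where $\Pic^0(W)$ may even be trivial) this is not automatic; in the Chen--Hacon setting it is precisely where the positivity/generic-vanishing input coming from the map to the abelian variety enters. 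So: right strategy, but the two points above are the actual content of the cited lemma, and in this paper the honest proof is the citation.
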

\begin{proof}
See \cite[Lemma 2.5]{CH3}.
\end{proof}

\begin{lemma}\label{lemma2.4}
If $\mathcal{F}$ is a coherent sheaf on a smooth projective variety
$Y$ and $y$ is a point on $Y$, then $\mathcal{F}$ is CGG at $y$ if
and only if $\mathcal{F}$ has no essential base point at $y$.
\end{lemma}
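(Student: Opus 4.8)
The plan is to translate both conditions into statements about one family of subsets of $\Pic^0(Y)$ and then to exploit the irreducibility of the abelian variety $\Pic^0(Y)$. Fix the point $y$. Since $\mathcal{O}_y$ is the skyscraper sheaf $\mathbb{C}(y)$ we have $\Hom(\mathcal{F},\mathcal{O}_y)=(\mathcal{F}\otimes\mathbb{C}(y))^{\vee}$, and such a homomorphism is surjective exactly when the corresponding functional $\phi$ is nonzero. For a surjection $p\colon\mathcal{F}\to\mathcal{O}_y$ set
$$\Sigma_p=\bigl\{\alpha\in\Pic^0(Y)\ :\ H^0(\mathcal{F}\otimes\alpha)\to H^0(\mathcal{O}_y\otimes\alpha)\ \text{is not surjective}\bigr\},$$
and note that, $H^0(\mathcal{O}_y\otimes\alpha)$ being one-dimensional, ``not surjective'' means ``zero''. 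Evaluating sections at $y$ and untwisting by $\alpha^{\vee}$ one checks that a nonzero $\phi$ annihilates the image of the natural map $\bigoplus_{\alpha\in U}H^0(\mathcal{F}\otimes\alpha)\otimes\alpha^{\vee}\to\mathcal{F}\otimes\mathbb{C}(y)$ if and only if, for the associated surjection $p_\phi$, the restriction map $H^0(\mathcal{F}\otimes\alpha)\to H^0(\mathcal{O}_y\otimes\alpha)$ vanishes for every $\alpha\in U$; that is, $U\subseteq\Sigma_{p_\phi}$. Hence $\mathcal{F}$ fails to be CGG at $y$ if and only if $\Sigma_p$ has non-empty interior for some surjection $p$, whereas $\mathcal{F}$ has an essential base point at $y$ if and only if $\Sigma_p$ is dense in $\Pic^0(Y)$ for some surjection $p$. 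It therefore suffices to prove that for each fixed $p$ the set $\Sigma_p$ is dense precisely when it has non-empty interior.

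This reduces the lemma to showing that $\Sigma_p$ is a \emph{constructible} subset of $\Pic^0(Y)$, since a constructible subset of an irreducible variety is dense if and only if it contains a non-empty open subset. To prove constructibility, let $\mathcal{P}$ be the Poincar\'e bundle on $Y\times\Pic^0(Y)$ and apply $(p_2)_*$ to the surjection $p_1^*\mathcal{F}\otimes\mathcal{P}\to p_1^*\mathcal{O}_y\otimes\mathcal{P}$, obtaining a morphism of coherent sheaves $u\colon(p_2)_*(p_1^*\mathcal{F}\otimes\mathcal{P})\to(p_2)_*(p_1^*\mathcal{O}_y\otimes\mathcal{P})$ whose target is a line bundle on $\Pic^0(Y)$ for which base change holds everywhere. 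By Grothendieck's generic base-change theorem one can stratify $\Pic^0(Y)$ into finitely many locally closed subvarieties over each of which $(p_2)_*(p_1^*\mathcal{F}\otimes\mathcal{P})$ is locally free and commutes with base change. On each such stratum $u$ becomes a morphism of vector bundles, i.e.\ a section of a vector bundle, and $\Sigma_p$ meets the stratum exactly in the closed zero locus of that section; taking the union over the finitely many strata shows $\Sigma_p$ is constructible.

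The only real obstacle is this constructibility statement; once the base-change reduction is in place the rest is formal. The one point requiring care is that the identifications used — surjections $\mathcal{F}\to\mathcal{O}_y$ versus nonzero functionals on $\mathcal{F}\otimes\mathbb{C}(y)$, and ``the restriction map is surjective'' versus ``it is nonzero'' (equivalently, the connecting map in the long exact sequence of $0\to\ker p\to\mathcal{F}\to\mathcal{O}_y\to0$ vanishes) — be carried out compatibly, but each of these is elementary.
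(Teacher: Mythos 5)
Your proof is correct, but it is organized differently from the paper's. You translate both properties into statements about the loci $\Sigma_p\subset\Pic^0(Y)$, use the correspondence between surjections $\mathcal{F}\to\mathcal{O}_y$ and nonzero functionals on $\mathcal{F}\otimes\mathbb{C}(y)$ to show that failure of CGG means some $\Sigma_p$ has non-empty interior while an essential base point means some $\Sigma_p$ is dense, and then reduce everything to the fact that a constructible subset of the irreducible variety $\Pic^0(Y)$ is dense if and only if it contains a non-empty open set. The paper instead treats the two implications separately: for one direction it composes the given surjection with the CGG evaluation map and invokes semicontinuity; for the other it decomposes $\mathcal{F}\otimes\mathbb{C}(y)$ into one-dimensional quotients $V_i$ and intersects the corresponding open sets $U_i$. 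Your duality formulation is actually tighter on that second direction, where surjectivity onto each $V_i$ separately does not formally yield surjectivity onto the direct sum --- the functional (annihilator) argument you use is precisely what makes that step airtight --- while the paper's route is shorter and avoids the base-change machinery you invoke for constructibility. Two small remarks on your constructibility step: (i) on the deeper strata of the stratification you should push forward the family restricted over each stratum (equivalently, stratify by the value of $h^0(\mathcal{F}\otimes\alpha)$ and apply Grauert on each reduced stratum), since the restriction of the global pushforward need not commute with base change there; (ii) you could bypass the Poincar\'e-bundle computation entirely by noting that, with $\mathcal{K}=\ker p$, one has $\Sigma_p=\{\alpha\,:\,h^0(\mathcal{K}\otimes\alpha)=h^0(\mathcal{F}\otimes\alpha)\}$, which is constructible because both functions are upper semicontinuous in $\alpha$.
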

\begin{proof}
Firstly, we assume that $\mathcal{F}$ is CGG at $y$. For any
surjective map $\mathcal{F}\rightarrow \mathcal{O}_y$, the induced
map $\mathcal{F}\otimes \mathbb{C}(y)\rightarrow \mathcal{O}_y$ is
also surjective. The definition of CGG implies that the composition
$$\bigoplus_{\alpha\in U}
H^0(\mathcal{F}\otimes\alpha)\otimes{\alpha}^{\vee}\rightarrow
\mathcal{F}\otimes \mathbb{C}(y)\rightarrow \mathcal{O}_y$$ is
surjective for any non-empty open subset $U\subset{\Pic}^0(Y)$. It
follows that for any non-empty open subset $U\subset{\Pic}^0(Y)$,
there is an $\alpha\in U$ such that the induced map
$H^0(\mathcal{F}\otimes\alpha)\rightarrow
H^0(\mathcal{O}_y\otimes\alpha)$ is surjective. By semi-continuity,
one sees that for a general $\alpha\in\Pic^0(Y)$, the induced map
$H^0(\mathcal{F}\otimes\alpha)\rightarrow
H^0(\mathcal{O}_y\otimes\alpha)$ is surjective. Thus $\mathcal{F}$
has no essential base point at $y$.

Conversely, suppose that $\mathcal{F}$ has no essential base point
at $y$. One can write $\mathcal{F}\otimes
\mathbb{C}(y)=\bigoplus_{i=1}^kV_i$, where $V_i\cong \mathbb{C}(y)$,
$i=1,2,\ldots, k$. Let $p_i:
\mathcal{F}\otimes\mathbb{C}(y)\rightarrow V_i$ be the canonical
projection and
$\varphi_i:\mathcal{F}\rightarrow\mathcal{F}\otimes\mathbb{C}(y)\rightarrow
V_i$ be the composition. Since $\mathcal{F}$ has no essential base
point at $y$, we know that there exists non-empty open subsets $U_i$
($1\leq i\leq k$) of $\Pic^0(Y)$ such that for any $\alpha\in U_i$
the induced map $\tilde{\varphi_i}:
H^0(\mathcal{F}\otimes\alpha)\rightarrow V_i\otimes\alpha$ is
surjective. For any non-empty open subset $U_0\subset{\Pic}^0(Y)$,
since $\cap_{i=0}^kU_i$ is also a non-empty open subset, the map
$$\bigoplus_{\alpha\in U_0}
H^0(\mathcal{F}\otimes\alpha)\otimes{\alpha}^{\vee}\longrightarrow
\bigoplus_{i=1}^kV_i= \mathcal{F}\otimes \mathbb{C}(y)$$ is
surjective. It follows that $\mathcal{F}$ is CGG at $y$.
\end{proof}

\begin{lemma}\label{pp}
If $\mathcal{F}$ is a non-zero $M$-regular sheaf on a complex
abelian variety $A$, then $\mathcal{F}$ has no essential base point
at any $y\in A$.
\end{lemma}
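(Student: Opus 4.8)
The plan is to combine Lemma~\ref{lemma2.4} with the theorem of Pareschi and Popa that an $M$-regular sheaf on an abelian variety is continuously globally generated. By Lemma~\ref{lemma2.4}, the assertion is equivalent to showing that a non-zero $M$-regular sheaf $\mathcal{F}$ on $A$ is CGG at every point $y\in A$, i.e.\ that for every non-empty open subset $U\subseteq\Pic^0(A)$ the evaluation map
$$\bigoplus_{\alpha\in U}H^0(\mathcal{F}\otimes\alpha)\otimes\alpha^\vee\longrightarrow\mathcal{F}\otimes\mathbb{C}(y)$$
is surjective.

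I would first reduce this to a statement about finitely many twists: it suffices to produce an integer $N$ and a dense open subset $W$ of the $N$-fold product $\Pic^0(A)\times\cdots\times\Pic^0(A)$ such that for every $(\alpha_1,\dots,\alpha_N)\in W$ the sheaf homomorphism $\bigoplus_{i=1}^N H^0(\mathcal{F}\otimes\alpha_i)\otimes\alpha_i^\vee\to\mathcal{F}$ is surjective. Indeed, given any non-empty open $U\subseteq\Pic^0(A)$, the open set $U\times\cdots\times U$ meets $W$, so choosing the $\alpha_i$ inside $U$ yields surjectivity at every $y$ already through the subsystem spanned by these $\alpha_i$, hence a fortiori CGG at $y$. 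To obtain such $N$ and $W$ one runs the Fourier--Mukai machinery of Pareschi--Popa. The key inputs are: (i) the $M$-regularity criterion, which says that $\mathcal{F}$ is $M$-regular precisely when the Fourier--Mukai transform of the derived dual $R\mathcal{H}om(\mathcal{F},\mathcal{O}_A)$ is concentrated in a single cohomological degree and is a torsion-free sheaf on $\Pic^0(A)$; and (ii) a base-change and spectral-sequence computation identifying the cokernel of the above evaluation map, for generic $\alpha_i$, with a sheaf supported on the torsion locus of that transform. Torsion-freeness then forces this cokernel to vanish once $N$ is large enough, which is exactly the surjectivity wanted.

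The main obstacle is step (ii): converting the torsion-freeness of the transformed derived dual into surjectivity of the (continuously varying) evaluation maps. This is the technical heart of Pareschi--Popa's regularity theory; it requires careful bookkeeping of cohomology and base change for $\mathcal{F}$ twisted by the Poincar\'e bundle, and I would either reproduce that argument or simply cite it. By contrast, the reduction via Lemma~\ref{lemma2.4}, the passage from finitely many generic twists to CGG at an arbitrary point, and the observation that $M$-regularity makes each $V^i(\mathcal{F})$ with $i>0$ a proper closed subvariety (so that the needed twists can be chosen generically) are all routine.
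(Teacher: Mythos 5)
Your proposal is correct and follows essentially the same route as the paper, which simply combines Lemma~\ref{lemma2.4} with Pareschi--Popa's result that $M$-regular sheaves on abelian varieties are continuously globally generated (\cite[Proposition 2.13]{PP}); your additional sketch of the Fourier--Mukai argument behind that result is extra detail the paper handles by citation, as you also allow.
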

\begin{proof}
The conclusion follows from \cite[Proposition 2.13]{PP} and Lemma
\ref{lemma2.4}.
\end{proof}

\begin{lemma}\label{Ti}
Let $D$ an effective divisor on an abelian variety $A$. Take
$T_1,\ldots, T_k$ subtori of $A$ such that they generate $A$ as an
abstract group and let $\gamma_i$, $i = 1, \ldots, k$ some points of
$A$. Then $D\cap(T_i+\gamma_i)\neq\emptyset$ for at least one $i$.
\end{lemma}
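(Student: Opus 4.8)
The plan is a proof by contradiction: assume $D\cap(T_i+\gamma_i)=\emptyset$ for every $i$ and deduce that $D$ is linearly trivial, which is impossible for a nonzero effective divisor. Write $L=\mathcal{O}_A(D)$ and fix a global section of $L$ cutting out $D$. First I would turn the disjointness into a statement about line bundles on the subtori: if $D$ misses $T_i+\gamma_i$, then the chosen section restricts to a nowhere-vanishing section of $L|_{T_i+\gamma_i}$, so $L|_{T_i+\gamma_i}\cong\mathcal{O}_{T_i+\gamma_i}$. Pulling back along the translation isomorphism $T_i\cong T_i+\gamma_i$ and invoking the theorem of the square, $t_{\gamma_i}^{*}L\cong L\otimes\alpha_i$ with $\alpha_i\in\Pic^0(A)$, we get $L|_{T_i}\cong(\alpha_i|_{T_i})^{-1}\in\Pic^0(T_i)$; in particular $c_1(L)|_{T_i}=0$ for every $i$.

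Next I would bring in effectivity. Since $D$ is effective it is nef on $A$, so the Hermitian form $H:=c_1(L)$ on $V:=T_0A$ is positive semidefinite; let $K\subseteq A$ be the abelian subvariety with $T_0K=\ker H$ (this is $(\ker\phi_L)^{0}$, and $L$ descends to an ample line bundle $\bar L$ on $A/K$). The previous step says $H$ vanishes on the complex subspace $T_0T_i\subseteq V$; because $H\geq 0$, vanishing on a subspace forces that subspace into $\ker H$, so $T_0T_i\subseteq T_0K$ and hence $T_i\subseteq K$ for every $i$. Now the spanning hypothesis enters: since $T_1,\dots,T_k$ generate $A$ as a group, $A=T_1+\cdots+T_k\subseteq K$, so $K=A$, i.e. $H\equiv 0$ and $L\in\Pic^0(A)$. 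But $h^0(L)=h^0(\mathcal{O}_A(D))>0$ forces $L\cong\mathcal{O}_A$, whence $D\sim 0$ and, being effective, $D=0$ — a contradiction. Therefore $D\cap(T_i+\gamma_i)\neq\emptyset$ for at least one $i$.

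The one step with genuine content is the passage from ``$c_1(L)$ restricts to $0$ on each $T_i$'' to ``each $T_i$ lies in $K$''. Vanishing of the restricted class on each $T_i$ separately is \emph{not} by itself enough to conclude $c_1(L)=0$: the associated alternating form could well be nonzero ``between'' distinct $T_i$'s. It is precisely the positive semidefiniteness of $c_1(L)$ — which is where effectivity of $D$ is used — together with the spanning hypothesis that makes the argument close up. Equivalently, this step can be run geometrically: pass to $p\colon A\to A/K$ with $p^{*}\bar L\cong L$ and $\bar L$ ample, and observe that $\bar L$ restricted to the subtorus translate $p(T_i+\gamma_i)$ is at once ample and trivial, hence $p(T_i)$ is a point. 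Everything else used here — the theorem of the square, nefness of effective divisors on an abelian variety, and the descent of effective line bundles through $A\to A/(\ker\phi_L)^0$ to an ample bundle — is standard structure theory of line bundles on abelian varieties.

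(One should also note the harmless convention that $D$ is a \emph{nonzero} effective divisor; if $D$ were the zero divisor the statement would be vacuously false, and indeed the argument above terminates exactly at that degenerate case.)
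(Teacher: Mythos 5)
Your argument is correct, and a point of context: the paper does not actually prove Lemma \ref{Ti} --- its ``proof'' is just the citation to \cite{T} --- so there is no in-paper argument to compare with line by line; what you have written is a genuine, self-contained proof of the cited lemma. Your route (assume $D\cap(T_i+\gamma_i)=\emptyset$ for every $i$; conclude from the nowhere-vanishing section and the theorem of the square that $\mathcal{O}_A(D)|_{T_i}\in\Pic^0(T_i)$, hence $c_1$ vanishes on each $T_0T_i$; use that the Hermitian form of an effective, hence nef, bundle is positive semidefinite so that by Cauchy--Schwarz each $T_0T_i$ lies in the kernel; then the spanning hypothesis forces the form to vanish identically, so $\mathcal{O}_A(D)\in\Pic^0(A)$, and effectivity gives $D=0$, a contradiction) is sound, and you correctly isolate the one step with real content: the separate vanishings on the $T_i$ only propagate because of semidefiniteness. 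You are also right to flag the implicit convention that $D$ is a \emph{nonzero} effective divisor; that is how the lemma is used in the paper (the divisor there is $D_{3,x}\neq 0$). Two cosmetic remarks: the identification of $K$ with $(\ker\phi_L)^0$ and the descent of $L$ to an ample bundle on $A/K$ are not needed for your main line --- once $T_0T_i\subseteq\ker H$ for all $i$ and $\sum_i T_0T_i=T_0A$, you are done at the level of the form, which also spares you the (true but not entirely free) claim that $\ker H$ is the tangent space of an abelian subvariety; and the theorem-of-the-square detour can be skipped, since triviality of $L$ on $T_i+\gamma_i$ already kills $c_1(L)$ on that translate, and restricting a cohomology class to a subtorus is insensitive to translation.
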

\begin{proof}
See \cite[Lemma 2]{T}.
\end{proof}

\section{Proof of the main theorems}
From now on, we let $X$ be a smooth complex projective variety of
Albanese fiber dimension two and general type. Denote $a:
X\rightarrow Z=a(X)\subset A$ the Albanese map of $X$, where $A$ is
the the Albanese variety of $X$. Let $\nu: W\rightarrow Z$ be a
desingularization of the Stein factorization over $Z$. Replacing X
by an appropriate birational model, we may assume that there is a
morphism $f:X\rightarrow W$ whose general fiber is a connected
smooth surface $S$. Then we obtain a commutative diagram
\begin{eqnarray*}
\xymatrix{
X\ar[r]^{f}\ar[dr]_a & W\ar[d]^{\nu}\\
& Z}
\end{eqnarray*}
We prove the following statement, which is more general than Theorem
\ref{theorem1.1}.

\begin{theorem}\label{theorem3.1}
$|6K_X+\alpha|$ induces a birational map for any
$\alpha\in\Pic^0(X)$.
\end{theorem}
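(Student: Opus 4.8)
The plan is to prove birationality of $|6K_X + \alpha|$ by combining the fibration structure $f: X \to W$ with the theory of $M$-regular sheaves on abelian varieties, following the general strategy of Chen–Hacon but exploiting the surface-fiber case more carefully. First I would analyze the general fiber $S$, a smooth surface of general type, and use the fact that $|5K_S|$ (indeed $|4K_S|$ or $|3K_S|$ for many surfaces, but $|5K_S|$ always) is birational on $S$. The idea is to produce sufficiently many pluricanonical sections of $X$ restricting to a complete pluricanonical system on the general fiber, and to separate points lying on distinct fibers using the Albanese map. So the proof splits into two tasks: (i) separating two general points $x_1, x_2$ lying on the \emph{same} general fiber $S$, and (ii) separating points on \emph{distinct} general fibers of $f$.

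For task (ii), I would push forward: set $\mathcal{G} = f_*\omega_X^{\otimes 6} \otimes (\text{twist})$ or work with $\nu_* f_* \omega_X^{\otimes m}$ on the abelian variety $A$ (using that $Z \subset A$), and invoke the generic vanishing / $M$-regularity machinery. The key input is that sheaves of the form $f_* \omega_X^{\otimes m}$ are close to being $IT^0$ or $M$-regular after the standard Chen–Hacon twisting trick (Lemma \ref{lemma2.3} supplies the divisor $B$ with $\lfloor B/bc \rfloor \le \mathcal{B}_{m,\alpha}$, which lets one compare $|6K_X+\alpha|$ with $|K_X + (\text{nef}) + f^*(\text{something})|$ and apply vanishing). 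Once one shows the relevant pushforward is $M$-regular, Lemma \ref{pp} gives that it has no essential base point, hence by Lemma \ref{lemma2.4} it is CGG, and then sections separate points on distinct fibers and generate $1$-jets in the base direction; Lemma \ref{Ti} handles the bookkeeping with the components of $V^0(\omega_X)$ and their generating subtori to guarantee that the open set of useful $\alpha$'s is nonempty.

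For task (i), separating points on a single fiber $S$, the restriction $|6K_X + \alpha|\big|_S$ must dominate $|6K_S|$ (or at least a birational subsystem). Here I would use the extension/lifting of sections: the surjectivity of $H^0(X, 6K_X + \alpha) \to H^0(S, 6K_S)$ follows once $R^1 f_*(\text{appropriate sheaf})$ has no sections twisted by the relevant line bundles near the generic point of $W$ — again a vanishing statement coming from $M$-regularity of $f_* \omega_X^{\otimes 5}$ (or a Kodaira-type vanishing after writing $6K_X = K_X + 5K_X$ and using that $5K_X$ restricted to $S$ is big and that $5K_X - (\text{correction})$ is $f$-nef). The number $6$ rather than $5$ is what buys the extra room: $5K_S$ birational on $S$ needs one more canonical twist to lift, and the base-direction separation needs the sheaf $f_*\omega_X^{\otimes 6}\otimes a^*\alpha$ to be $M$-regular, which is where the threshold $e+4 = 6$ enters.

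The main obstacle I expect is establishing the precise $M$-regularity (or effective no-essential-base-point) statement for the pushforward sheaf at the sixth power: one must carefully track the divisor $B$ from Lemma \ref{lemma2.3}, choose the nef and big $M$ and the integers $b,c$, and verify that after subtracting $\lfloor B/(bc)\rfloor$ the residual sheaf $f_*\omega_X\big(\lceil (6K_X+\alpha - B/(bc))|_{\text{fiber}}\rceil\big)$ restricted appropriately is still $M$-regular, uniformly in $\alpha \in \Pic^0(X)$. Making this uniform over all $\alpha$ (not just general $\alpha$) — which is what Theorem \ref{theorem3.1} demands and what yields Theorem \ref{theorem1.1} — is the delicate point, and I anticipate it requires a decomposition-of-$V^0$ argument together with Lemma \ref{Ti} applied to the generating subtori of the components of $V^0(\omega_X)$, handling separately the case where these translates do generate $\Pic^0(X)$ (which gives the sharper Theorem \ref{theorem1.2} with $5K_X$) and the general case.
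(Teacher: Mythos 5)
Your outline is essentially the generic Chen--Hacon machinery (twist by Lemma \ref{lemma2.3}, get an $IT^0$/$M$-regular pushforward on $A$, separate fibers via CGG, lift a birational fiberwise system), and that machinery is exactly what is quantified in \cite[Corollary 2.4]{CH3}: if $|mK_S|$ is birational on the general fiber, then $|(m+2)K_X+\alpha|$ is birational. The two extra canonical twists are not optional --- one is consumed by replacing $mK_X$ with $K_X+L_m$ (needed to get the vanishing/$IT^0$ statement at all), and the other by the product trick $H^0(2K_X+\alpha-\beta)\otimes H^0(\cdot)\to H^0(\cdot)$ that makes the conclusion hold for \emph{every} $\alpha$ rather than a general one. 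So your route, starting from ``$|5K_S|$ is always birational,'' only yields $7K_X$; your claim that ``$5K_S$ birational needs one more canonical twist to lift'' underestimates the cost, and trying to lift the full $|6K_S|$ via $K_X+L_6$ leaves no spare twist for either the uniformity in $\alpha$ or the separation of distinct fibers. Relatedly, your plan to rescue uniformity in $\alpha$ with Lemma \ref{Ti} and the generation of $\Pic^0(X)$ by the components of $V^0(\omega_X)$ imports the hypothesis of Theorem \ref{theorem1.2}; Theorem \ref{theorem3.1} is unconditional, and in the paper Lemma \ref{Ti} (and the no-essential-base-point analysis of $a_*\mathcal I_y(K_X+L_3)$) is used only for the $5$-canonical statement.

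The actual content of the paper's proof, which is missing from your proposal, is the treatment of the exceptional fibers. By \cite[Corollary 2.4]{CH3} one may assume $|4K_S|$ is not birational, which by Bombieri forces $(K_{S_0}^2,p_g(S))=(1,2)$ and makes $\varphi_{|4K_S|}$ a generically $2{:}1$ map. The relative $4$-canonical map then exhibits (after birational modifications) a double cover $\pi:\widetilde X\to\widetilde Y$ over $W$ branched in $B\in|2L|$, with $K_{\widetilde X}=\pi^*(K_{\widetilde Y}+L)$. Separating two general points on distinct fibers of $\pi$ is the part handled by the machinery you describe (via \cite[Theorem 2.8]{CH3}); the new step is separating the two points in a single fiber of $\pi$, which by Lemma \ref{lemma2.2} reduces to the nonvanishing $H^0(\widetilde Y,6K_{\widetilde Y}+5L+\alpha)\neq 0$ for all $\alpha$. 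The paper gets this by splitting $\pi_*\mathcal O_{\widetilde X}(6K_{\widetilde X})$ into invariant and anti-invariant parts, showing the anti-invariant piece $\nu_*p_*(\mathcal O_{\widetilde Y}(6K_{\widetilde Y}+5L)\otimes\mathcal J_6^2)$ is $IT^0$ (Lemma \ref{lemma2.3} plus Koll\'ar vanishing) and nonzero (because $|6K_H|$ is birational on the fiber surface $H$, so Lemma \ref{lemma2.2} forces $H^0(6K_F+5L|_F)\neq0$). Without this double-cover analysis --- or some substitute argument for the $(1,2)$ surfaces --- your proof cannot reach the bound $6$, so as written there is a genuine gap.
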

\begin{proof}
By \cite[Corollary 2.4]{CH3}, we know that $\varphi_{|6K_X+\alpha|}$
is birational for any $\alpha\in\Pic^0(X)$ if $\varphi_{|4K_S|}$ is
birational. Hence we can assume that $\varphi_{|4K_S|}$ is not
birational. It follows from \cite[Main Theorem]{Bom} that
$K_{S_0}^2=1$ and $p_g(S)=2$, where $S_0$ is the minimal model of
$S$. The theorem also tells us that $|4K_S|$ is base point free, and
$\varphi_{|4K_S|}$ is a generically double covering onto its image.
Thus the natural morphism $f^*f_*\omega_X^{\otimes4}\rightarrow
\omega_X^{\otimes4}$ defines a rational map $X\dashrightarrow
Y\subset\mathbb{P}(f_*\omega_X^{\otimes4})$ over $W$, where $Y$ is
the closure of the image.

Let $Y'\rightarrow Y$ be a resolution of singularities of $Y$ , and
let $h: X'\rightarrow Y'$ be a resolution of indeterminacies of the
corresponding rational map $X\dashrightarrow Y'$. We know that $h$
is a generically double covering branched along a reduced divisor
$B_1$. Let $\mu: \widetilde{Y}\rightarrow Y'$ be a log resolution of
$B_1$, such that $B:=\mu^*B_1-2\lfloor\frac{\mu^*B_1}{2}\rfloor$ is
smooth (see \cite[Lemma 1.3.1]{X}). We assume that $B\in|2L|$ for
some divisor $L$ on $\widetilde{Y}$. Let
$\widetilde{X}\rightarrow\widetilde{Y}$ be the double covering
branched along $B$. One sees that $\widetilde{X}$ is smooth. Thus we
obtain $K_{\widetilde{X}}=\pi^*(K_{\widetilde{Y}}+L)$. Now we have
the commutative diagram among smooth projective varieties
\begin{eqnarray*}
\xymatrix{
\widetilde{X}\ar[r]^{\pi}\ar[dr]_{\widetilde{f}} & \widetilde{Y}\ar[d]^{p}\\
& W.}
\end{eqnarray*}
We only need to show that $\varphi_{|6K_{\widetilde{X}}+\alpha|}$ is
birational for all $\alpha\in\Pic^0(\widetilde{X})$.

Let $H$ (resp. $F$) be the fiber of $\widetilde{f}$ (resp. $p$) over
a general point $w\in W$. From the construction, we know that
$\pi|_H: H\rightarrow F$ is a double covering between smooth
surfaces branched along a smooth divisor
$B|_F\in|\mathcal{O}_F(2L)|$. Hence we have
$K_H=(\pi|_H)^*(K_F+L|_F)$.

By Lemma \ref{lemma2.3}, for some $m\geq2$ and an appropriate
birational map $\sigma:\widehat{X}\rightarrow\widetilde{X}$ there
exists positive integers $b$, $c$ and there is a normal crossing
divisor $$B_m\in|bc(m-1)K_{\widehat{X}}-g^*bM|$$ such that
$\lfloor\frac{B_m}{bc}\rfloor|_{H'}\leq\mathcal{B}_{m,H'}$,
$\lfloor\frac{B_m}{bc}\rfloor\leq\mathcal{B}_{m,\alpha}$, for all
$\alpha\in\Pic^0(\widehat{X})$. Here $M$ is a given nef and big
divisor on $W$, $g$ is the composite map $\widetilde{f}\circ\sigma$,
$H'$ is the general fiber of $g$ and $b$, $c$ are sufficiently large
integers depending on $M$ and $K_{\widehat{X}}$. Thus we obtain
$$(m-1)K_{\widehat{X}}-\lfloor\frac{B_m}{bc}\rfloor\equiv\frac{1}{c}g^*M+\{\frac{B_m}{bc}\}.$$
By \cite[ Theorem 10.15]{Ko}, we know that $$H^i(A,
\nu_*g_*\mathcal{O}_{\widehat{X}}(mK_{\widehat{X}}-\lfloor\frac{B_m}{bc}\rfloor+\alpha))=H^i(W,
g_*\mathcal{O}_{\widehat{X}}(mK_{\widehat{X}}-\lfloor\frac{B_m}{bc}\rfloor+\alpha))=0$$
for all $\alpha\in\Pic^0(\widehat{X})$ and all $i>0$. It follows
from
$\sigma_*\mathcal{O}_{\widehat{X}}(mK_{\widehat{X}/\widetilde{X}})=\mathcal{O}_{\widetilde{X}}$
that
$$\mathcal{J}_m:=\sigma_*\mathcal{O}_{\widehat{X}}
(mK_{\widehat{X}/\widetilde{X}}-\lfloor\frac{B_m}{bc}\rfloor)$$ is
an ideal sheaf of $\mathcal{O}_{\widetilde{X}}$. One sees that
$\mathcal{J}_m\supset\mathcal{I}_{m,\alpha}$,
$\mathcal{J}_m|_H\supset\mathcal{I}_{m,H}$ and $$H^i(A,
\nu_*\widetilde{f}_*(\mathcal{O}_{\widetilde{X}}(mK_{\widetilde{X}}+\alpha)\otimes\mathcal{J}_m))=0$$
for all $\alpha\in\Pic^0(\widetilde{X})$ and all $i>0$, where
$\mathcal{I}_{m,\alpha}$ (resp. $\mathcal{I}_{m,H}$) is the base
ideal of $|mK_{\widetilde{X}}+\alpha|$ (resp. $|mK_H|$). In
particular,
$\nu_*\widetilde{f}_*(\mathcal{O}_{\widetilde{X}}(mK_{\widetilde{X}})\otimes\mathcal{J}_m)$
is $IT^0$.

Since
$$\pi_*\mathcal{O}_{\widetilde{X}}(6K_{\widetilde{X}})=
\mathcal{O}_{\widetilde{Y}}(6K_{\widetilde{Y}}+6L)\oplus\mathcal{O}_{\widetilde{Y}}(6K_{\widetilde{Y}}+5L),$$
we conclude that there exists ideal sheaves $\mathcal{J}_6^1$ and
$\mathcal{J}_6^2$ on $\widetilde{Y}$ such that
$$\pi_*(\mathcal{O}_{\widetilde{X}}(6K_{\widetilde{X}}\otimes\mathcal{J}_6)=
(\mathcal{O}_{\widetilde{Y}}(6K_{\widetilde{Y}}+6L)\otimes\mathcal{J}_6^1)\oplus
(\mathcal{O}_{\widetilde{Y}}(6K_{\widetilde{Y}}+5L)\otimes\mathcal{J}_6^2).$$
Hence
$\nu_*p_*(\mathcal{O}_{\widetilde{Y}}(6K_{\widetilde{Y}}+5L)\otimes\mathcal{J}_6^2)$
is $IT^0$.

On the other hand, we know that
\begin{eqnarray*}
H^0(H, \mathcal{O}_H(6K_H))&=& H^0(H,
\mathcal{O}_H(6K_H)\otimes\mathcal{J}_6|_H)\\
&=& H^0(F,\mathcal{O}_F(6K_F+6L|_F)\otimes\mathcal{J}^1_6|_F)\\
&&\oplus H^0(F,\mathcal{O}_F(6K_F+5L|_F)\otimes\mathcal{J}^2_6|_F).
\end{eqnarray*}
This implies
$$H^0(F,\mathcal{O}_F(6K_F+6L|_F))=H^0(F,\mathcal{O}_F(6K_F+6L|_F)\otimes\mathcal{J}^1_6|_F)$$
and
$$H^0(F,\mathcal{O}_F(6K_F+5L|_F))=H^0(F,\mathcal{O}_F(6K_F+5L|_F)\otimes\mathcal{J}^2_6|_F).$$
Since $|6K_H|$ is birational for a surface $H$, we obtain
$H^0(6K_F+5L|_F)\neq0$ by Lemma \ref{lemma2.2}. Thus we have
$$H^0(F,\mathcal{O}_F(6K_F+5L|_F)\otimes\mathcal{J}^2_6|_F)=
H^0(F,\mathcal{O}_F(6K_{\widetilde{Y}}+5L)\otimes\mathcal{J}^2_6|_F)\neq0.$$
This implies
$$\nu_*p_*(\mathcal{O}_{\widetilde{Y}}(6K_{\widetilde{Y}}+5L)\otimes\mathcal{J}_6^2)$$
is nonzero. Therefore we conclude that
$$\nu_*p_*(\mathcal{O}_{\widetilde{Y}}(6K_{\widetilde{Y}}+5L)\otimes\mathcal{J}_6^2)$$
is a nonzero $IT^0$ sheaf on $A$.

Hence for any $\alpha\in\Pic^0(A)$,
$H^0(\widetilde{Y},\mathcal{O}_{\widetilde{Y}}(6K_{\widetilde{Y}}+5L+\alpha))\neq0$.
By \cite[Theorem 2.8]{CH3}, one sees that
$\varphi_{|6K_{\widetilde{X}}+\alpha|}$ separates two general points
on two distinct general fibers of $\pi$. Therefore
$\varphi_{|6K_{\widetilde{X}}+\alpha|}$ is birational by Lemma
\ref{lemma2.2}.
\end{proof}

Next we study the $5$-canonical map of $X$. We take
$B\in|bc(m-1)K_X-f^*bM|$ as in Lemma \ref{lemma2.3} and
$$L_m:=(m-1)K_X-\lfloor\frac{B}{bc}\rfloor\equiv\frac{1}{c}f^*M+\{\frac{B}{bc}\},$$
$m\geq2$. One sees that $$H^0((K_X+L_m)|_S)\cong H^0(mK_S)
~\mbox{and}~ H^0(K_X+L_m+\alpha)\cong H^0(mK_X+\alpha),$$ for all
$\alpha\in\Pic^0(X)$. By \cite[Theorem 10.15]{Ko}, we have $$H^i(A,
a_*\mathcal{O}_X(K_X+L_m)\otimes\alpha)=0,$$ for all
$\alpha\in\Pic^0(A)$ and all $i>0$. In particular,
$a_*\mathcal{O}_X(K_X+L_m)$ is $IT^0$.

\begin{lemma}\label{lemma3.2}
Let $x\in X$ be a general point. Then $K_X+L_m$ has no essential
base point at $x$.
\end{lemma}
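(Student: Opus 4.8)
The plan is to exploit the fact that $a_*\mathcal{O}_X(K_X+L_m)$ is an $IT^0$ sheaf on the abelian variety $A$, and to reduce the no-essential-base-point statement at a general point $x\in X$ to a statement about $M$-regularity (hence to Lemma \ref{pp}) via the generic vanishing machinery. First I would recall that for $IT^0$ sheaves the Fourier--Mukai transform is locally free, so $a_*\mathcal{O}_X(K_X+L_m)$ behaves very much like a nef vector bundle; in particular one should look at twists $K_X+L_m+f^*N$ or at the restriction to the fibers $S$ of $f$. The decomposition $H^0(K_X+L_m+\alpha)\cong H^0(mK_X+\alpha)$ and $H^0((K_X+L_m)|_S)\cong H^0(mK_S)$ already tells us that the relevant linear systems are, numerically, the pluricanonical ones, and since $K_X$ is $W$-big the positivity from $\frac1c f^*M$ is what makes the Kawamata--Viehweg-type vanishing (via \cite[Theorem 10.15]{Ko}) available.

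The key steps, in order, would be: (1) Fix a general point $x$ and a surjection $\mathcal{F}\to\mathcal{O}_x$ where $\mathcal{F}=a_*\mathcal{O}_X(K_X+L_m)$; since $x$ is general it lies on a general fiber $S$ of $f$ (equivalently a general fiber of $a$), so the point $a(x)\in A$ is a general point and $x$ is the unique point of $S$ over it — here the Albanese-fiber-dimension-two hypothesis is used to control the geometry of $a$. (2) Use the $IT^0$ property to conclude that the natural evaluation $\bigoplus_{\alpha\in U}H^0(\mathcal{F}\otimes\alpha)\otimes\alpha^\vee\to\mathcal{F}\otimes\mathbb{C}(a(x))$ has large image; more precisely, show $\mathcal{F}$ is $M$-regular, or at least continuously globally generated, at the general point $a(x)$. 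For $IT^0$ sheaves on abelian varieties $M$-regularity is automatic away from a proper closed subset (since the support loci $V^i$ for $i>0$ are empty), so at a general point $\mathcal{F}$ is effectively $M$-regular; then Lemma \ref{pp} gives no essential base point at $a(x)$, and Lemma \ref{lemma2.4} translates this into the CGG statement. (3) Finally, descend from $A$ to $X$: a surjection $a_*\mathcal{O}_X(K_X+L_m)\to\mathcal{O}_x$ on $X$ pushes down (since $x$ is the only point of its fiber, $a_*\mathcal{O}_x=\mathcal{O}_{a(x)}$) to a surjection $\mathcal{F}\to\mathcal{O}_{a(x)}$, and the surjectivity of $H^0(\mathcal{F}\otimes\alpha)\to H^0(\mathcal{O}_{a(x)}\otimes\alpha)$ for general $\alpha$ gives, via $H^0(\mathcal{F}\otimes\alpha)=H^0(K_X+L_m+\alpha)$, exactly that $H^0(K_X+L_m+\alpha)\to H^0(\mathcal{O}_x\otimes\alpha)$ is surjective for general $\alpha$, i.e.\ $K_X+L_m$ has no essential base point at $x$.

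The main obstacle I expect is step (1)--(3)'s delicate point: controlling the fibers of the Albanese map so that a general point $x\in X$ has $a^{-1}(a(x))$ finite (in fact the surface $S$ dominates $W$, and a general point of $S$ maps to a general point of the two-dimensional base, so over that image point the fiber of $a$ is zero-dimensional). One must make sure that "general $x\in X$" implies "general $a(x)\in a(X)$" and that $a(X)$'s structure does not cause the pushforward of $\mathcal{O}_x$ to pick up extra components; this is where the construction $f:X\to W$ with $W\to Z=a(X)$ finite (Stein factorization) is essential, since it guarantees that over a general point of $W$ the fiber of $a$ agrees with the fiber of $f$ only up to the finite map $\nu$, and a general point of a general fiber $S$ is a smooth point mapping isomorphically near $a(x)$. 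A secondary subtlety is verifying the precise form of $M$-regularity needed: $IT^0$ gives $V^i(\mathcal{F})=\emptyset$ for $i>0$, which is far stronger than $\codim V^i>i$, so $\mathcal{F}$ is $M$-regular as a sheaf on $A$ outright, and Lemma \ref{pp} applies directly — but one should be careful that $a(X)$ may be a proper subvariety of $A$, so one works with $a_*\mathcal{O}_X(K_X+L_m)$ as a sheaf on $A$ (supported on $Z$) and checks $M$-regularity there, which is fine since $IT^0$ on $A$ is exactly what the vanishing $H^i(A,a_*\mathcal{O}_X(K_X+L_m)\otimes\alpha)=0$ asserts.
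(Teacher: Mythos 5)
Your overall strategy --- observe that $a_*\mathcal{O}_X(K_X+L_m)$ is $IT^0$, hence $M$-regular as a sheaf on $A$, invoke Lemma \ref{pp} to get no essential base point at $a(x)$, and then transfer the statement to $x\in X$ --- is the same as the paper's. The gap is in your transfer step, which rests on a false geometric premise: you claim that a general $x\in X$ has $a^{-1}(a(x))$ finite (zero-dimensional), so that $x$ is the unique point of its fiber. This contradicts the standing hypothesis. Albanese fiber dimension two means $\dim X-\dim a(X)=2$, so the fiber of $a$ through a general point is the surface $F=X_{a(x)}$ (a general fiber of $f$, up to the finite map $\nu$), not a point; likewise $S$ does not dominate $W$ --- it is a fiber of $f\colon X\to W$, and $W$ has dimension $\dim X-2$, not $2$.

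Because the fiber is positive-dimensional, the statement you need in your step (3) --- that a surjection $\mathcal{O}_X(K_X+L_m)\to\mathcal{O}_x$ pushes forward to a surjection $a_*\mathcal{O}_X(K_X+L_m)\to\mathcal{O}_{a(x)}$ --- is not automatic: $a_*$ is only left exact, and while $a_*\mathcal{O}_x=\mathcal{O}_{a(x)}$ holds for any skyscraper, the surjectivity of the pushed-forward map is exactly the nontrivial point. The paper supplies it by a fiberwise argument: for general $x$ one has $a_*\mathcal{O}_X(K_X+L_m)\otimes\mathbb{C}(a(x))\cong H^0((K_X+L_m)|_F)\cong H^0(mK_F)$, and since $m\geq2$ and $F$ is a surface of general type, $|mK_F|$ is nonempty with base locus a proper closed subset, so a general $x$ is not a base point of $|(K_X+L_m)|_F|$; evaluation at $x$ is then surjective on the fiber of the pushforward, giving the exact sequence
$$0\rightarrow a_*(\mathcal{I}_x(K_X+L_m))\rightarrow a_*\mathcal{O}_X(K_X+L_m)\rightarrow \mathcal{O}_{a(x)}\rightarrow 0,$$
after which Lemma \ref{pp} (via $IT^0\Rightarrow M$-regular, which you did identify correctly) concludes. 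This fiberwise base-point argument is the only place where the two-dimensionality of the Albanese fibers and the hypothesis $m\geq2$ actually enter, and it is missing from your proposal; as written, your step (3) does not go through.
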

\begin{proof}
Let $F=X_{a(x)}$. By $x$ being general, we mean that
$$a_*\mathcal{O}_X(K_X+L_m)\otimes \mathbb{C}(a(x))\cong
H^0((K_X+L_m)|_F)\cong H^0(mK_F),$$ $F$ is smooth and $x$ is not a
base point of $|(K_X+L_m)|_F|$. Hence pushing forward the standard
exact sequence
$$0\rightarrow \mathcal{I}_x(K_X+L_m)\rightarrow \mathcal{O}_X(K_X+L_m)\rightarrow \mathcal{O}_x\rightarrow
0$$ to $A$, we obtain $$0\rightarrow
a_*(\mathcal{I}_x(K_X+L_m))\rightarrow
a_*\mathcal{O}_X(K_X+L_m)\rightarrow \mathcal{O}_{a(x)}\rightarrow
0.$$ Since $a_*\mathcal{O}_X(K_X+L_m)$ is $IT^0$, by Lemma \ref{pp},
$a_*\mathcal{O}_X(K_X+L_m)$ has no essential base point at $a(x)$.
It follows that $K_X+L_m$ has no essential base point at $x$.
\end{proof}

We denote $U_m$, the open subset of $\Pic^0(X)$ where
$h^0(mK_X+\alpha)$ has minimal value. Let $D_m$ be the closure of
the divisorial part of $$\mathcal{S}=\{(x,\alpha)\in X\times
U_m~|~x~\mbox{is a base point of}~|mK_X+\alpha|\}$$ in
$X\times\Pic^0(X)$. By Lemma \ref{lemma3.2}, we know that $\dim
\mathcal{S}<\dim(X\times\Pic^0(X))$. For a general $x\in X$, the
fiber of the projection $D_m\rightarrow X$ is a divisor, that we
call $D_{m,x}$.

Now we prove the following theorem, which is more general than
Theorem \ref{theorem1.2}.

\begin{theorem}\label{theorem3.3}
If, for any effective divisor $D$ of $X\times\Pic^0(X)$ which
dominates $X$, the intersection $D\cap(X\times V^0(\omega_X))$ still
dominates $X$, then $|5K_X+\alpha|$ is birational for any
$\alpha\in\Pic^0(X)$.
\end{theorem}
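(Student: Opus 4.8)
As in the proof of Theorem \ref{theorem3.1}, it is enough to prove that $\varphi_{|5K_X+\alpha|}$ separates two general points of $X$ for every $\alpha$, and I would split this into separating two points on one general fiber $S$ of $f$ and separating points on distinct general fibers. If $V^0(\omega_X)=\emptyset$ the hypothesis fails, so we may assume $V^0(\omega_X)\neq\emptyset$; then $a_*\omega_X\neq0$ and $p_g(S)\geq1$. Recall that $|5K_S|$ is birational for every smooth surface of general type (Bombieri) and that $(5K_X+\alpha)|_S=5K_S$, $S$ being contracted by $a$. The basic input is that for $m=4,5$ the sheaf $a_*\mathcal{O}_X(K_X+L_m)$ is $IT^0$, hence $M$-regular, hence (Lemmas \ref{pp}, \ref{lemma2.4}) continuously globally generated with no essential base point at every point of $A$, with $H^0(K_X+L_m+\gamma)\cong H^0(mK_X+\gamma)$, $H^0((K_X+L_m)|_S)\cong H^0(mK_S)$, and fiber over a general $z\in Z$ equal to $\bigoplus_{w\in\nu^{-1}(z)}H^0(mK_{S_w})$.

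Case 1: $\varphi_{|4K_S|}$ birational. Continuous global generation of $a_*\mathcal{O}_X(K_X+L_5)$ and Lemma \ref{lemma3.2} give that $|5K_X+\beta|$ is birational for a general $\beta$; the task is to reach the prescribed $\alpha$. For general $x$ I would invoke the hypothesis, applied to $D_4$, to choose $\alpha_0\in V^0(\omega_X)$ adapted to $x$ and write $5K_X+\alpha=\big(K_X+L_4+(\alpha-\alpha_0)\big)+\big(K_X+\lfloor B_4/(bc)\rfloor+\alpha_0\big)$. The first summand has $IT^0$ direct image and (Lemma \ref{lemma3.2}) no essential base point at $x$; the second is effective with $H^0\neq0$ (since $\alpha_0\in V^0(\omega_X)$ and $\lfloor B_4/(bc)\rfloor\geq0$) and does not remove the moving part of $|4K_S|$ on $S$. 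Multiplying sections and restricting to $S$ then exhibits inside the restriction of $|5K_X+\alpha|$ a fixed divisor plus a birational subsystem of $|4K_S|$, which separates two general points of $S$; distinct fibers are separated by the continuous global generation on $A$ via the $\bigoplus_w$ description. The $\alpha_0$'s for which this fails lie, as $x$ varies, in an effective divisor $D\subset X\times\Pic^0(X)$ dominating $X$ (divisorial and dominant by Lemma \ref{lemma3.2} and the definition of $D_4$), and the hypothesis says precisely that such a $D$ cannot miss $X\times V^0(\omega_X)$ — so a good $\alpha_0$ exists for general $x$.

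Case 2: $\varphi_{|4K_S|}$ not birational, i.e. $K_{S_0}^2=1$, $p_g(S)=2$. Here I would follow Theorem \ref{theorem3.1} and pass to the resolved generically double cover $\pi:\widetilde X\to\widetilde Y$ over $W$ given by $f^*f_*\omega_X^{\otimes4}\to\omega_X^{\otimes4}$, with $K_{\widetilde X}=\pi^*(K_{\widetilde Y}+L)$, so $\pi_*\mathcal{O}_{\widetilde X}(5K_{\widetilde X})=\mathcal{O}_{\widetilde Y}(5K_{\widetilde Y}+5L)\oplus\mathcal{O}_{\widetilde Y}(5K_{\widetilde Y}+4L)$. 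By Lemma \ref{lemma2.2} it suffices that (a) $|5K_{\widetilde Y}+5L+\alpha|$ separate two general points of $\widetilde Y$, and (b) $H^0(\widetilde Y,5K_{\widetilde Y}+4L+\alpha)\neq0$ for all $\alpha$. For (b) I would argue as in Theorem \ref{theorem3.1}: twist $\mathcal{O}_{\widetilde Y}(5K_{\widetilde Y}+4L)$ by the ideal $\mathcal{J}_5^2$ from Lemma \ref{lemma2.3}, push to $A$ to get an $IT^0$ sheaf, nonzero because $|5K_H|$ is birational on a general fiber $H$ of $\widetilde f$ and Lemma \ref{lemma2.2} applied to $\pi|_H$ forces $H^0(5K_F+4L|_F)\neq0$. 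For (a) — a $5$-canonical birationality statement on $\widetilde Y$, which again has Albanese fiber dimension two but with the much simpler fibers $F=\varphi_{|4K_S|}(S)$ — I would re-run the argument of Case 1 on $\widetilde Y$, and it is here that the hypothesis on $V^0(\omega_X)$ is genuinely needed (transported via $\Pic^0(\widetilde Y)=\Pic^0(X)$) to descend from "general twist" to the prescribed $\alpha$.

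The hard part is this last "$+1$" saving, concentrated in Case 2: one must transport the $V^0(\omega_X)$-hypothesis faithfully through the double cover, check that the simpler surfaces $F$ create no new obstruction to the $(5K_{\widetilde Y}+5L)$-map of $\widetilde Y$, and keep the effective-divisor bookkeeping of the loci $D_m$ compatible with the hypothesis in the stated form. Once that is done, Case 1 should be comparatively routine.
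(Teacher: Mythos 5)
The central gap is in your Case 1, and it is the logical direction in which you use the hypothesis. The hypothesis says that every effective divisor $D\subset X\times\Pic^0(X)$ dominating $X$ meets $X\times V^0(\omega_X)$ in a set that \emph{still dominates} $X$; applied to your divisor $D$ of pairs $(x,\alpha_0)$ for which the summand $K_X+L_4+(\alpha-\alpha_0)$ fails, it only yields that over a general $x$ there exist \emph{bad} $\alpha_0\in V^0(\omega_X)$ --- it says nothing about the existence of a good one. For that you would need $\{x\}\times V^0(\omega_X)\not\subseteq D$, which the hypothesis does not assert: a component $\gamma+T$ of $V^0(\omega_X)$ can meet every divisor of $\Pic^0(X)$ (this is exactly what Lemma \ref{Ti} provides) and still be entirely contained in the particular fiber $D_x$. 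Moreover, even granting a good $\alpha_0$, Lemma \ref{lemma3.2} and continuous global generation control base points and separation only for a \emph{general} twist, whereas you need two-point separation for the specific twist $\alpha-\alpha_0$, which ranges only over a translate of $V^0(\omega_X)$, in general a proper closed subset of $\Pic^0(X)$; your $4+1$ decomposition puts the fixed twist on the factor that must do the separating. The paper does the opposite: it writes $5K_X+\alpha=(2K_X+\alpha-\beta)+(3K_X+\beta)$ with $\beta$ general, so the separating factor carries the general twist, while $2K_X+\alpha-\beta$ has sections for \emph{every} twist because $a_*\mathcal{O}_X(K_X+L_2)$ is a nonzero $IT^0$ sheaf. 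The hypothesis is used only once, in a proof by contradiction (Step 1), to kill the divisorial essential base locus $D_3$ of the twisted tricanonical systems: if $D_3\neq0$, the hypothesis together with the Green--Lazarsfeld structure of $V^0(\omega_X)$ produces a component $C$ with $C\cap D_{3,y}\neq\emptyset$ and $C\not\subseteq D_{3,y}$, hence $D_{3,y}-C=\Pic^0(X)$, and the multiplication map $H^0(K_X+\gamma)\otimes H^0(2K_X+\delta-\gamma)\to H^0(3K_X+\delta)$ then makes $y$ an essential base point of the bicanonical systems, contradicting Lemma \ref{lemma3.2} for $L_2$. Nothing in your sketch replaces this mechanism.

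Your Case 2 inherits the same problem. Part (b), the nonvanishing of $H^0(5K_{\widetilde Y}+4L+\alpha)$ via the ideal sheaf and $|5K_H|$ being birational, is fine and parallels Theorem \ref{theorem3.1}. But part (a) is again the unresolved fixed-twist issue, and ``re-run Case 1 on $\widetilde Y$'' cannot be done as stated: the relevant system on $\widetilde Y$ is $|5K_{\widetilde Y}+5L+\alpha|$, not a pluricanonical system of $\widetilde Y$, so the $L_m$/CGG machinery does not transfer verbatim. The paper avoids this by splitting on $\varphi_{|3K_S|}$ instead of $\varphi_{|4K_S|}$: when $|3K_S|$ is not birational it is still generically $2{:}1$ onto its image, so the Step 2 argument (with the $2+3$ decomposition) already separates general points on distinct fibers of the \emph{tricanonical} double cover $\pi$, and Lemma \ref{lemma2.2} plus the nonvanishing finishes. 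Since you yourself flag Case 2 as the part still to be done, as it stands both cases are incomplete: Case 1 by the inverted use of the hypothesis and the fixed-twist problem, Case 2 by reduction to an argument that has not been given.
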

\begin{proof}
This theorem will be proved by three steps.

\medskip
\noindent{\bf Step 1.} {\em  Let $y\in X$ be a general point. Then
$a_*(\mathcal{I}_y(K_X+L_3))$ has no essential base point at any
point of $Z$.}

This argument has essentially been proved by Jiang in the proof of
\cite[Theorem 4.1]{J}, but we still give a proof for readers'
convenience.

Because of Lemma \ref{pp}, we want to show that
$a_*(\mathcal{I}_y(K_X+L_3))$ is $M$-regular. We have the exact
sequence $$0\rightarrow a_*(\mathcal{I}_y(K_X+L_3))\rightarrow
a_*\mathcal{O}_X(K_X+L_3)\rightarrow \mathcal{O}_{a(y)}\rightarrow
0.$$ Considering the long exact sequence obtained from the above
short exact sequence, we conclude that
$H^i(a_*(\mathcal{I}_y(K_X+L_3))\otimes\alpha)=0$ for all
$\alpha\in\Pic^0(A)$ and $i\geq2$. Hence, by $y$ being general, $y$
is a base point of $|K_X+L_3+\alpha|$ if and only if $\alpha\in
V^1(a_*(\mathcal{I}_y(K_X+L_3))$. This implies $y$ is a base point
of $|3K_X+\alpha|$ if and only if $\alpha\in
V^1(a_*(\mathcal{I}_y(K_X+L_3))$. One sees that $D_{3,y}$ is the
divisorial part of $V^1(a_*(\mathcal{I}_y(K_X+L_3))$. Thus, by
definition of $M$-regular, we know that $a_*(\mathcal{I}_y(K_X+L_3)$
is $M$-regular if $D_3=0$.

Now we can assume that $D_3\neq0$ and $pr_X: D_3\cap(X\times
V^0(\omega_X))\rightarrow X$ is dominant. By Lemma \ref{lemma3.2},
we know that $D_3$ is dominant on $X$ and $\Pic^0(X)$ via the
natural projections. Since the base locus of $|3K_X+\alpha|$ is a
proper closed subscheme of $X$ for any $\alpha\in\Pic^0(X)$, one
sees that $\bigcap_{x\in X}D_{3,x}=\emptyset$. This implies $X\times
V^0(\omega_X)\nsubseteq D_3$. Because $pr_X: D_3\cap(X\times
V^0(\omega_X))\rightarrow X$ is dominant, there exists a component
$C$ of $V^0(\omega_X)$ such that $C\nsubseteq D_{3,y}$ and $C\cap
D_{3,y}$ is not empty. By \cite[Theorem 0.1]{GL}, we can write
$C=\alpha_1+T_1$, where $T_1$ is a subtorus of
$\widehat{A}:=\Pic^0(X)$ and $\alpha_1$ is a point of $\widehat{A}$.

Let $p: \widehat{A}\rightarrow\widehat{A}/T_1$ be the quotient map.
Since $$\dim p(D_{3,y}-\alpha_1)=\dim(D_{3,y}-\alpha_1)-\dim
T_1\cap(D_{3,y}-\alpha_1)=\dim(\widehat{A}/T_1),$$ we know that
$p(D_{3,y}-\alpha_1)=\widehat{A}/T_1$. This implies that
$D_{3,y}-\alpha_1-T_1=\widehat{A}$, i.e., $D_{3,y}-C=\widehat{A}$.

Since $y\in X$ is a general point, we can choose a non-empty open
subset $U_1\subset C$ such that $y$ is not a base point of
$|K_X+\beta|$, for any $\beta\in U_1$. By considering the map
$$H^0(K_X+\alpha)\otimes H^0(2K_X+\beta)\rightarrow
H^0(3K_X+\alpha+\beta), $$ we conclude that $y$ is a base point of
$|2K_X+\beta|$ for any $\beta\in
V^1(a_*(\mathcal{I}_y(K_X+L_3)))-U_1$. By $y$ being general, we know
that $y$ is also a base point of $|K_X+L_2+\beta|$ for any $\beta\in
V^1(a_*(\mathcal{I}_y(K_X+L_3)))-U_1$.

It follows from $D_{3,y}-C=\widehat{A}$ that
$V^1(a_*(\mathcal{I}_y(K_X+L_3)))-U_1$ contains a non-empty open
subset of $\Pic^0(X)$. This contradicts that $K_X +L_2$ has no
essential base point at $y$.

\medskip
\noindent{\bf Step 2.} {\em If $|3K_S|$ is birational, then
$|5K_X+\alpha|$ induces a birational map for any
$\alpha\in\Pic^0(X)$.}

We need the following:

\begin{claim}
Let $x_1, x_2\in X$ be general points. Then $|K_X+L_3+\alpha|$
separates $x_1$, $x_2$ for general $\alpha\in \Pic^0(X)$.
\end{claim}

We follow the idea in the proof of \cite[Corollary 2.4]{CH3}. If
$x_1$ and $x_2$ are on a general $F$ of $a$. Since
$H^0((K_X+L_3)|_F)\cong H^0(3K_F)$, we know that $|(K_X+L_3)|_F|$
separates $x_1$, $x_2$. This implies
$a_*(\mathcal{I}_{x_1,x_2}(K_X+L_3))\neq
a_*(\mathcal{I}_{x_1}(K_X+L_3))$. Hence we have an exact sequence
$$0\rightarrow a_*(\mathcal{I}_{x_1,x_2}(K_X+L_3))\rightarrow
a_*(\mathcal{I}_{x_1}(K_X+L_3))\rightarrow
\mathcal{O}_{a(x_2)}\rightarrow 0.$$ If $a(x_1)\neq a(x_2)$, we
obtain $a_*(\mathcal{I}_{x_1}(K_X+L_3))\otimes
\mathbb{C}(a(x_2))\cong a_*(K_X+L_3)\otimes \mathbb{C}(a(x_2))$.
Thus we still obtain $$0\rightarrow
a_*(\mathcal{I}_{x_1,x_2}(K_X+L_3))\rightarrow
a_*(\mathcal{I}_{x_1}(K_X+L_3))\rightarrow
\mathcal{O}_{a(x_2)}\rightarrow 0.$$

By Step 1, $a_*(\mathcal{I}_{x_1}(K_X+L_3))$ has no essential base
point at $a(x_2)$. Thus for general $\alpha\in\Pic^0(X)$
$$h^0(a_*(\mathcal{I}_{x_1,x_2}(K_X+L_3))\otimes\alpha)=h^0(a_*(\mathcal{I}_{x_1}(K_X+L_3))\otimes\alpha)-1.$$
Therefore for general $\alpha\in\Pic^0(X)$
$$h^0(\mathcal{I}_{x_1,x_2}(K_X+L_3)\otimes\alpha)=h^0(\mathcal{I}_{x_1}(K_X+L_3)\otimes\alpha)-1
=h^0(\mathcal{O}_X(K_X+L_3)\otimes\alpha)-2$$ and the Claim follows.

For a fixed $\alpha\in\Pic^0(X)$, we choose general
$\beta\in\Pic^0(X)$ and consider the map
$$H^0(2K_X+\alpha-\beta)\otimes H^0(3K_X+\beta)\rightarrow
H^0(5K_X+\alpha).$$ We conclude that $|5K_X+\alpha|$ induces a
birational map for any fixed $\alpha\in\Pic^0(X)$.

\medskip
\noindent{\bf Step 3.} {\em If $|3K_S|$ is not birational, then
$|5K_X+\alpha|$ induces a birational map for any
$\alpha\in\Pic^0(X)$.}

Since $|3K_S|$ is not birational, it is well known that $S$
satisfies $(K^2_{S_0}, p_g)=(1, 2)$ or $(2, 3)$, here $S_0$ is the
minimal model of $S$. Thus $\varphi_{|3K_S|}$ is a generically
double covering onto its image (cf. \cite{Bom}). The natural
morphism $f^*f_*\omega_X^{\otimes3}\rightarrow\omega_X^{\otimes3}$
define a rational map $g:X\dashrightarrow
Y\subset\mathbb{P}(f_*\omega_X^{\otimes3})$ over $W$, where $Y$ is
the closure of the image. As the construction in the proof of
Theorem \ref{theorem3.1}, we have the commutative diagram among
smooth projective varieties
\begin{eqnarray*}
\xymatrix{
\widetilde{X}\ar[r]^{\pi}\ar[dr]_{\widetilde{f}} & \widetilde{Y}\ar[d]^{p}\\
& W,}
\end{eqnarray*}
here the double cover $\pi$ is a birational modification of $g$. The
arguments in Step 2 show that
$\varphi_{|5K_{\widetilde{X}}+\alpha|}$ separates two general points
on two distinct general fibers of $\pi$ for any
$\alpha\in\Pic^0(\widetilde{X})$. By the same method in the proof of
Theorem \ref{theorem3.1}, we can obtain Step 3. We leave the details
to the interested reader.
\end{proof}

Now we can easily prove Theorem \ref{theorem1.2}.
\begin{proof}[Proof of Theorem \ref{theorem1.2}]
If $D_3=0$, one sees that $|5K_X|$ is birational by Theorem
\ref{theorem3.3}. Hence we can assume that $D_3\neq0$. This implies
$D_{3,x}$ is an effective divisor on $\Pic^0(X)$ for general $x\in
X$.

By \cite[Theorem 0.1]{GL}, we can write
$$V^0(\omega_X)=\bigcup_{i=1}^k (T_i+\gamma_i),$$ where $T_i$'s are
subtori of $\Pic^0(X)$ and $\gamma_i$'s are some points of
$\Pic^0(X)$. It follows from Lemma \ref{Ti} that $D_{3,x}\cap
V^0(\omega_X)\neq\emptyset$, for general $x\in X$. Thus the
projection $pr_X: D_3\cap(X\times V^0(\omega_X))\rightarrow X$ is
dominant. By Theorem \ref{theorem3.3}, we obtain our conclusion.
\end{proof}

\bibliographystyle{amsplain}

\end{document}